\let\ORIlabel\label
\let\ORIrefstepcounter\refstepcounter
\AddToHook{package/hyperref/before}{
   \let\label\ORIlabel
   \let\refstepcounter\ORIrefstepcounter
}
\documentclass[final,onefignum,onetabnum]{siamart171218}
\usepackage{comment}
\usepackage{graphicx}
\usepackage{booktabs}
\usepackage{tikz}
\usepackage{pgfplots}
\usepackage{subcaption}
\usepackage{float}
\pgfplotsset{compat=1.18}
\usetikzlibrary{arrows.meta,bending,decorations.markings}

\usepackage{amsfonts}
\usepackage{amssymb}
\usepackage{bm}
\usepackage{graphicx}
\usepackage{epstopdf}
\usepackage{algorithmic}

\allowdisplaybreaks

\ifpdf
  \DeclareGraphicsExtensions{.eps,.pdf,.png,.jpg}
\else
  \DeclareGraphicsExtensions{.eps}
\fi

\newsiamthm{prop}{Proposition}

\newsiamremark{remark}{Remark}
\newsiamremark{hypothesis}{Hypothesis}

\crefname{hypothesis}{Hypothesis}{Hypotheses}

\newsiamthm{claim}{Claim}

\headers
{FFT-accelerated BIE for smooth surfaces}
{Wenmao Hua, Jun Lai, Huiyi Li and Wangtao Lu}

\title{An FFT-accelerated boundary integral equation method for wave scattering by smooth surfaces in three dimensions}

\author{
Wenmao Hua\thanks{Academy of Mathematics and Systems Science, Chinese Academy of Sciences, Beijing 100190, China (\email{huawenmao@amss.ac.cn}).}
\and
Jun Lai\thanks{School of Mathematical Sciences, Zhejiang University, Hangzhou, Zhejiang 310058, China (\email{laijun6@zju.edu.cn}).}
\and
Huiyi Li\thanks{School of Mathematical Sciences, Zhejiang University, Hangzhou, Zhejiang 310058, China (\email{lililihuiy@zju.edu.cn}).}
\and
Wangtao Lu\thanks{School of Mathematical Sciences, Zhejiang University, Hangzhou, Zhejiang 310058, China (\email{wangtaolu@zju.edu.cn}).}
}

\usepackage{amsopn}

\def\Xint#1{\mathchoice
{\XXint\displaystyle\textstyle{#1}}%
{\XXint\textstyle\scriptstyle{#1}}%
{\XXint\scriptstyle\scriptscriptstyle{#1}}%
{\XXint\scriptscriptstyle\scriptscriptstyle{#1}}%
\!\int}

\def\XXint#1#2#3{{\setbox0=\hbox{$#1{#2#3}{\int}$}
\vcenter{\hbox{$#2#3$}}\kern-.5\wd0}}

\def\cint{\Xint-}
\def\xint{\Xint\times}

\newcommand{\bi}{\mathbf{i}}

\newcommand{\br}{\bm r}
\newcommand{\brs}{\bm r_s}
\newcommand{\bnu}{\bm \nu}

\newcommand{\dS}{d\sigma}

\newcommand{\tphi}{\tilde{\phi}}
\newcommand{\tpsi}{\tilde{\psi}}

\newcommand{\Fr}[1]{\langle #1 \rangle}

\makeatletter
\newcommand*{\addFileDependency}[1]{
  \typeout{(#1)}
  \@addtofilelist{#1}
  \IfFileExists{#1}{}{\typeout{No file #1.}}
}
\makeatother

\ifpdf
\hypersetup{
  pdftitle={An Example Article},
  pdfauthor={Jun Lai, Huiyi Li, and Wangtao Lu}
}
\fi

\newcommand{\ii}{\mathbf{i}}
\newcommand{\FC}[1]{\left\langle #1\right\rangle}

\begin{document}

\maketitle

\begin{abstract}
For wave scattering by axisymmetric surfaces, the fast Fourier transform (FFT)
method provides an effective tool to accelerate standard boundary integral
equation (BIE) solvers.  Surface BIEs can be decoupled into a
series of curve integral equations on the generating curve, due to the
convolution-like integral operators.  The Fourier coefficients of the
three-dimensional fundamental kernels can be rapidly computed through three-term
recurrence relations based on Miller's algorithm. Such well-established
techniques break down for nonaxisymmetric surfaces.

This paper proposes a novel FFT-accelerated boundary integral method for wave scattering by smooth surfaces of arbitrary shapes. The Fourier coefficients of the singular kernels now satisfy higher-order recurrence relations. Although they can be solved with an optimal linear complexity by the standard Olver's algorithm, it turns out that a singularity swapping approach that rewrites each kernel as the product of a smooth function and an axisymmetric-related singular factor is realistically much faster. Consequently, Miller's algorithm together with the standard FFT convolution yields an ${\cal O}(M\log M)$ approach for evaluating the ${\cal O}(M)$ Fourier coefficients of the kernels, attaining exactly the same order of complexity for axisymmetric surfaces! With such FFT-based efficient procedures, we rewrite the surface BIEs in terms of ${\cal O}(M)$ curve integrals, which are proved to  exhibit logarithmic singularities, discretize them by panel-based generalized Gaussian quadratures, and obtain spectrally accurate linear systems to approximate the wavefields. Extensive numerical experiments are carried out to demonstrate the effectiveness and spectral accuracy of the new approach.

\end{abstract}

\begin{keywords}
  acoustic scattering problem, smooth surfaces, boundary integral equation method, fast Fourier transform, singularity swapping
\end{keywords}

\begin{AMS}
  35J05, 45A05, 65D32, 65E05, 65R20
\end{AMS}

\section{Introduction}

Wave scattering problems arise in a broad range of applications, including optics,
radar, remote sensing, and seismology. Among the
various existing numerical solvers, boundary integral equation
(BIE) methods are attractive and quite promising, since they can directly handle
radiation conditions at infinity without truncating the unbounded domains and
establish governing integral equations on boundaries of scatterers, thus reducing the
overall dimensionality by one
\cite{colkre13}.

In this paper, we develop a high-accuracy BIE method for three-dimensional (3D)
acoustic scattering by smooth surfaces.
In the existing literature, various high-order discretization techniques have
been developed for two-dimensional (2D) BIEs on smooth
\cite{KapurRokhlin1997,alp99,KolmRokhlin2001,KlocknerBarnettGreengardONeil2013, kre91, bregimrok10} or nonsmooth \cite{Kress1990, Helsing2011, Bremer2012, lulu14,
SerkhRokhlin2016} closed curves. We don't intend to discuss these methods in detail here but
wish to mention the analytical kernel splitting technique by Kress \cite{kre91}
for smooth curves. This method exploits the periodicity of the density and the
explicit formulae \cite{Martensen1963,Kussmaul1969} of Fourier coefficients of the
logarithmic singularity to achieve spectrally accurate discretizations of the
BIEs. Unfortunately, such a straightforward approach breaks down for
3D BIEs on smooth closed surfaces in general. Some typical high-order discretization techniques have been developed for this setting \cite{BrunoKunyansky2001,YingBirosZorin2006,BremerGimbutas2012,WalaKlockner2019,PerezArancibiaFariaTurc2019,PerezArancibiaTurcFaria2019,ZhuVeerapaneni2022}. Nevertheless,
none of these works takes advantage of the directional periodicity of the density on closed
surfaces. Consequently, a natural question is: can we extend Kress' kernel
splitting techniques to smooth closed surfaces using such a periodicity property?
We expect that the discretization of surface BIEs can be
effectively reduced to curve integrals, so that the aforementioned 2D techniques
apply.

In this sense, the above question is completely answered
for axisymmetric surfaces \cite{YoungHaoMartinsson2012,HelsingKarlsson2014,
EpsteinGreengardONeil2019,LaiONeil2019}. The density is periodic in the
azimuthal direction, while the kernels in the governing surface BIE are
convolutional. Therefore, by Fourier transforming in the azimuthal variable, the
surface BIE can be decomposed into a sequence of decoupled curve BIEs, each
involving a Fourier mode of the density and the corresponding Fourier
coefficient of the kernel. The required ${\cal O}(M)$ Fourier coefficients can
be evaluated in ${\cal O}(M\log M)$ operations \cite{LaiONeil2019} using
analytic kernel splitting, three-term recurrence relations evaluated by Miller's
algorithm \cite{Olver1964}, and FFT-accelerated discrete convolutions.
An essential feature that makes the above approach possible is that the squared
distance between source and target points is a function of the azimuthal
difference, containing only three Fourier modes $0$ and $\pm1$. Consequently, the
above approach breaks down in general for smooth surfaces of arbitrary shapes.

Motivated by the aforementioned situation, this paper is devoted to a novel
FFT-accelerated method for the high-order discretization of surface BIEs on general smooth surfaces. Like the axisymmetric case, since the unknown density is periodic in the azimuthal direction, Fourier
transforming the governing surface BIE in the azimuthal variable can still give
rise to curve integral operators, though coupled now. However, their kernels,
Fourier coefficients of the 3D kernels, are no longer straightforward to evaluate. To
tackle this issue, we derive, based on the assumption that
the squared distance has a short Fourier-expansion approximation in the azimuthal variable
(which is indeed true for analytic surfaces), higher-order recurrence relations for the
most singular part of the 3D kernels. Regarded as a boundary value
problem, they can be solved by Olver's algorithm \cite{Olver1964} with the
optimal ${\cal O}(M)$ complexity if ${\cal O}(M)$ Fourier coefficients are required. In
realistic applications with a moderately large $M$, we find a promising and much
faster approach. We use the singularity swapping approach
\cite{afKlintebergBarnett2021} to rewrite each singular kernel as the product of a
smooth function, analytic for analytic surfaces, and an axisymmetric-related
singular factor. Consequently, Miller's algorithm, for evaluating the Fourier
coefficients of the singular factor in ${\cal O}(M)$ operations, and the
FFT-accelerated discrete convolutions can be used to evaluate the ${\cal O}(M)$
Fourier coefficients in ${\cal O}(M\log M)$ operations, reducing surface
integrals to
an ${\cal O}(M)$ number of 2D curve integral operators. It turns out that this
new approach attains exactly the same order of complexity as for axisymmetric
surfaces! For analytic surfaces, we further prove that the resulting curve
integrals for  weakly singular  kernels exhibit the same logarithmic
singularities as in 2D BIEs \cite{kre91}. This allows us to use the panel-based
generalized Gaussian quadratures \cite{bregimrok10} to further discretize the
${\cal O}(M)$ curve integral operators, giving rise to a final linear system for
approximating the unknown density. Extensive numerical experiments are carried
out to illustrate the effectiveness of the new approach.

The rest of the paper is organized as follows. Section~\ref{sec2} introduces the
parametrization of smooth surfaces, and formulates 3D acoustic scattering
problems. Section~\ref{sec3} first establishes the logarithmic singular behavior
of weakly singular kernel functions after azimuthal integration and then
develops the high-order Nystr\"om discretization of the single-layer operator.
Section~\ref{sec4} extends the proposed discretization schemes to the
double-layer operator and its adjoint. Section~\ref{sec5} carries out several
numerical experiments demonstrating the effectiveness of the new method.
Section~\ref{sec6} concludes the whole paper.

\section{Problem formulation}\label{sec2}
Let $\Gamma$ be a closed smooth surface parameterized
by
\begin{align}  \label{eq:para}
  \br(t,\theta) = \{[x(t,\theta),y(t,\theta), z(t,\theta)]:(t,\theta)\in[0,L]\times [0,2\pi], L>0\},
\end{align}
satisfying the following assumptions:
\begin{enumerate}
\item[(i).] For any fixed $\theta\in[0,2\pi]$, $\br(t,\theta)$ is smooth and forms a simple (i.e., non-self-intersecting) curve for $t\in(0,L)$;
\item[(ii).] For any fixed $t\in[0,L]$, $\br(t,\theta)$ is smooth and
  $2\pi$-periodic for $\theta\in\mathbb{R}$;
  \item[(iii).] For any $(t,\theta)\in[0,L]\times[0,2\pi]$,
    $J(t,\theta)=|\br_t(t,\theta)\times\br_\theta(t,\theta)|>0$.
\end{enumerate}
Assumption (iii) ensures that the outer unit normal $\nu(\br)$ to $\Gamma$ is
well-defined everywhere. In what follows, we shall frequently assume that $\br$
is analytic.

Let $\Omega_e$ and $\Omega_i$ be the exterior and interior domains
separated by $\Gamma$. Let $\Delta = \partial_x^2 + \partial_y^2 + \partial_z^2$
be the 3D Laplacian, $g$ be a given function on
$\Gamma$, $k>0$ be a constant, and $r=|{\bm r}|$.
For $\Omega\in \{\Omega_e, \Omega_i\}$, the governing  equation is the 3D Helmholtz equation:
\begin{equation}
    \Delta u + k^2 u = 0,\quad{\rm in}\quad\Omega.
\end{equation}
In this paper, we focus only on two standard types of boundary conditions: the
Dirichlet boundary condition
\begin{align*}
u = g, \quad {\rm on}\quad \Gamma;
\end{align*}
and the Neumann boundary condition
\begin{align*}
\partial_{\nu}u = g, \quad {\rm on}\quad \Gamma;
\end{align*}
For the interior problem with $\Omega=\Omega_i$, we assume that $k$ is not an
eigenvalue for the corresponding boundary condition, whereas for the exterior
problem with $\Omega=\Omega_e$, we enforce the well-known Sommerfeld radiation
condition
\begin{equation}
\label{eq:src}    
\lim_{r\to\infty} r(\partial_r u - \bi k u) = 0,
\end{equation}
to ensure the well-posedness \cite{colkre13}.
In practice, the above problems can model acoustic waves scattering in $\Omega$. In the following, we shall use (IDP), (INP), (EDP) and (ENP) to denote the interior Dirichlet, interior Neumann, exterior Dirichlet, and exterior Neumann problems, respectively.

As is known, the fundamental solution of the Helmholtz operator $\Delta + k^2$
satisfying \eqref{eq:src} is
\begin{align}  \label{eq:helm:green}
  G(\br;\brs) &= \frac{e^{\bi k \rho(\br,\brs)}}{4\pi\rho(\br, \brs)},
\end{align}
where $\br = \br(t,\theta)$ and $\brs = \br(t_s,\theta_s)$ 
denote the target and source points, respectively, and the distance function
\begin{align}  \label{eq:def:rho}
  \rho(\br,\brs) &= |\br(t,\theta)-\br(t_s,\theta_s)|.
\end{align}
The above four problems can be formulated as BIEs through standard layer-potential representations and jump relations
\cite{colkre13}.
Assume for $\br\notin\Gamma$,
\begin{align}  \label{eq:u:sl}
  u(\br) &= 2\int_{\Gamma} G(\br;\brs)\phi(\brs)d \sigma(\brs),
\end{align}
for some unknown density function $\phi$ defined on $\Gamma$, where $d\sigma$ denotes
the differential of surface area. Then, both (IDP) and (EDP) can be restated as:
find $\phi$, such that
\begin{equation}  \label{eq:bie:S}
{\cal S}[\phi] = g, \quad{\rm on}\quad \Gamma,
\end{equation}
where ${\cal S}$ denotes the following single-layer potential operator,
\begin{align}  \label{eq:sl}
  {\cal S}[\phi](\br) &= 2\int_{\Gamma} G(\br;\brs)\phi(\brs)\dS(\brs),\quad \br\in\Gamma.
\end{align}
By the jump relations, (INP) and (ENP) are formulated as: find $\phi$,
such that
\begin{align}  \label{eq:bie:Kpie}
 ({\cal K}' \pm {\cal I})[\phi] = g, \quad{\rm on}\quad \Gamma,\quad \{+\ {\rm for\ (INP)},-\ {\rm for\ (ENP)}\},
\end{align}
where ${\cal I}$ denotes the identity operator and ${\cal K}'$ is the adjoint
double-layer potential operator
\begin{align}\label{eq:adl}
  {\cal K}'[\phi](\br) &= 2\cint_{\Gamma} \frac{\partial G(\br;\brs)}{\partial \bnu(\br)}\phi(\brs)\dS(\brs),\quad \br\in\Gamma,
\end{align}
and $\cint$ indicates Cauchy's principal value.

Alternatively, we may assume for $\br\notin\Gamma$,
\begin{align}  \label{eq:u:dl}
  u(\br) &= 2\int_{\Gamma} \frac{\partial G(\br;\brs)}{\partial \bnu(\brs)} \psi(\brs)\dS(\brs),
\end{align}
for some unknown density function $\psi$ defined on $\Gamma$. Then, (IDP) and
(EDP) are formulated by
\begin{align}  \label{eq:bie:Kie}
 ({\cal K} \mp {\cal I})[\psi] = g, \quad{\rm on}\quad \Gamma,\quad \{-\ {\rm for\ (IDP)},+\ {\rm for\ (EDP)}\},
\end{align}
where ${\cal K}$ denotes the double-layer potential operator
\begin{align}\label{eq:dl}
  {\cal K}[\psi](\br) &= 2\cint_{\Gamma} \frac{\partial G(\br;\brs)}{\partial \bnu(\brs)}\psi(\brs)\dS(\brs),\quad \br\in\Gamma.
\end{align}
Next, (INP) and (ENP) become
\begin{align}  \label{eq:bie:T}
  {\cal T}[\psi] = g, \quad{\rm on}\quad \Gamma,
\end{align}
where ${\cal T}$ denotes the hypersingular operator
\begin{align}\label{eq:hyp}
  {\cal T}[\psi](\br) &= 2\xint_{\Gamma} \frac{\partial^2 G(\br;\brs)}{\partial \bnu(\brs)\partial \bnu(\br)}\psi(\brs)\dS(\brs),\quad \br\in\Gamma,
\end{align}
where $\xint$ indicates Hadamard's finite part.

Compare the two formulations in equations~\eqref{eq:u:sl} and~\eqref{eq:u:dl}. For the two
Dirichlet problems (IDP) and (EDP), equation~\eqref{eq:u:dl} is preferred as the
resulting integral equations~\eqref{eq:bie:Kie} are
second-kind Fredholm equations. Nevertheless, equation~\eqref{eq:bie:S} is frequently
used in practice due to the simplicity of ${\cal S}$. Similarly,
equation~\eqref{eq:u:sl} is preferred for the two Neumann problems (INP) and (ENP) as the
integral equations~\eqref{eq:bie:Kpie} are second-kind
Fredholm equations. However, equation~\eqref{eq:bie:T} is rarely used since the hypersingular integral in equation~\eqref{eq:hyp} is more challenging to discretize.
In this paper, we shall develop fast and accurate quadrature rules to
discretize the weakly singular operators ${\cal S}$, ${\cal K}$ and ${\cal K}'$,
and shall defer the discretization of ${\cal T}$ in future work.

\section{Discretization of single-layer operator \texorpdfstring{${\cal S}$}{S}}\label{sec3}

Based on the parameterization in equation~\eqref{eq:para}, the single-layer operator becomes
\begin{align}  \label{eq:S:para}
  {\cal S}[\phi](t,\theta) = \int_{0}^{L}\int_{0}^{2\pi}\frac{e^{\bi k \rho(t,\theta,t_s,\theta_s)}}{2\pi \rho(t,\theta,t_s,\theta_s)} \tilde{\phi}(t_s,\theta_s)d\theta_s dt_s,
\end{align}
where $\tilde{\phi}(t,\theta) = \phi(t,\theta) J(t,\theta)$ and
the distance function $\rho(t,\theta,t_s,\theta_s) = \rho(\br,\br_s)$. 

\subsection{Logarithmic singularity and generalized Gauss quadrature}\label{sec31}
We first discuss the discretization of the integral w.r.t. $t_s$ in
equation~\eqref{eq:S:para}. In doing so, suppose
\begin{equation}  \label{eq:Sphi:para}
  S^\phi(t,t_s,\theta):=\int_{0}^{2\pi}\frac{e^{\bi k \rho(t,\theta,t_s,\theta_s)}}{2\pi
    \rho(t,\theta,t_s,\theta_s)} \tilde{\phi}(t_s,\theta_s)d\theta_s
\end{equation}
has been accurately approximated via certain quadratures that shall be presented
later, so that it suffices to approximate
\[  S[\phi](t,\theta)=\int_{0}^{L} S^\phi(t,t_s,\theta) d t_s.
\]

The following theorem describes the singularity for a family of singular kernels
after integrating out the azimuthal variable, including $S^\phi$ in
\eqref{eq:Sphi:para} and the related kernels for the double-layer operator
${\cal K}$ and  its adjoint ${\cal K}'$ that will be introduced in
Section~\ref{sec4}.

\begin{theorem}\label{thm:logsing}
Fix $t,\theta$ so that we write the distance $\rho(t_s,\theta_s)$ for short.
Consider 
\begin{equation}\label{eq:qm-general}
 q_m(t_s)=\int_0^{2\pi}[\rho^2(t_s,\theta_s)]^{-\frac m 2}
 f(t_s,\theta_s)\,d\theta_s,\quad m\in\{1,3,5,7,\cdots\},
\end{equation}
where $f$ is analytic and $2\pi$-periodic in $\theta_s$. Then, in a sufficiently small neighborhood $V$ of $t$, $q_m$ admits the representation
\begin{equation}\label{eq:log-splitting-general}
 q_m(t_s)=-\frac12 p_m(t_s)\log (t_s-t)^2+r_m(t_s),
\end{equation}
where $p_m$ is analytic and $r_m$ is meromorphic with $t_s=t$ as its only
possible pole. Moreover, if
$
 f(t_s,\theta_s)=\mathcal O\!\left(\rho^{m-1}(t_s,\theta_s)\right),
$
then $r_m$ is analytic in $V$.
\end{theorem}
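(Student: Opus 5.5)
Set $s=t_s-t$ and $\alpha=\theta_s-\theta$. The approach is to factor the squared distance into a nonvanishing analytic function times a canonical 2D-type quadratic form, and then read off the logarithm from an explicit angular integral.

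By assumption (iii), $\br_t$ and $\br_\theta$ are linearly independent. By simplicity and periodicity, for $t_s$ near $t$ the function $\rho^2$ vanishes only at $(s,\alpha)=(0,0)$ modulo $2\pi$. Away from $\alpha=0$, say on $|\alpha|\ge\delta$, the integrand is analytic in $(t_s,\theta_s)$ for $t_s\in V$. That portion of $q_m$ is therefore analytic and goes into $r_m$. For the local part, take a smooth $2\pi$-periodic cutoff in $\alpha$; I would rather use an analytic splitting, for instance rewriting $\rho^2$ globally as below, so that no cutoff is needed.

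Write
\[
\rho^2=|\br(t+s,\theta+\alpha)-\br(t,\theta)|^2 .
\]
This is analytic, vanishes to second order at the origin, and has positive definite Hessian $Q(s,\alpha)=|s\br_t+\alpha\br_\theta|^2$. Following the singularity swapping idea, replace $\alpha$ by the periodic variable $\sigma=2\sin(\alpha/2)$, or work with $\alpha$ locally, and write $\rho^2=Q(s,\alpha)(1+h(s,\alpha))$ with $h$ analytic and $h(0,0)=0$. A cleaner route is the Morse lemma with analytic dependence. There is an analytic change of variables $\alpha=\alpha(s,\beta)$, with $\alpha(s,\cdot)$ analytic and invertible near $0$, such that
\[
\rho^2=a(s)\bigl(\beta^2+c(s)^2 s^2\bigr),
\]
where $a>0$ and $c>0$ are analytic. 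This comes from completing the square in $\alpha$: for fixed $s$ the minimum of $\rho^2$ over $\alpha$ is attained at an analytic $\alpha_0(s)$ (implicit function theorem, since $\partial_\alpha^2\rho^2>0$), and $\min_\alpha\rho^2=c^2s^2a$ with analytic, nonvanishing quotient. Then
\[
\rho^2-\min_\alpha\rho^2=(\alpha-\alpha_0)^2\times(\text{positive analytic}),
\]
and $\beta$ is defined as the analytic square root. After changing variables, the local part becomes
\[
\int_{-\delta'}^{\delta'}(\beta^2+\epsilon^2)^{-m/2}F(s,\beta)\,d\beta,\qquad \epsilon=c(s)s,
\]
with $F$ analytic. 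Since the endpoints depend on $s$, I would fix them by absorbing the remainder into the analytic part.

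Next, Taylor expand $F(s,\beta)=\sum_j F_j(s)\beta^j$ for $|\beta|\le\delta'$. Odd powers integrate against the even weight to analytic contributions, so consider
\[
I_{m,j}(\epsilon)=\int_{-\delta'}^{\delta'}\beta^{2j}(\beta^2+\epsilon^2)^{-m/2}\,d\beta .
\]
These integrals are explicit. Substituting $\beta=\epsilon\tan$ or using the recursion in $m$, each has the form
\[
A_{m,j}(\epsilon^2)\log\epsilon^2+B_{m,j}(\epsilon^2)+\epsilon^{-(m-1-2j)}C_{m,j},
\]
with $A,B$ analytic in $\epsilon^2$. The log term appears only when $2j\ge m-1$, and the negative powers appear only when $2j<m-1$. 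Since $\epsilon^2=c(s)^2s^2$ and $c$ is analytic and positive, $\log\epsilon^2=\log s^2+\text{analytic}$, while the negative powers give poles at $s=0$. Summing over $j$ with geometric control of $F_j$ (analyticity on a polydisk, together with uniform bounds of the form $|A_{m,j}|,|B_{m,j}|\lesssim\delta'^{2j}$) shows that the coefficient of $-\frac12\log(t_s-t)^2$ is an analytic $p_m$. Everything else is meromorphic with its only pole at $t_s=t$. In the case $f=\mathcal O(\rho^{m-1})$, we have $F=\mathcal O((\beta^2+\epsilon^2)^{(m-1)/2})$. Writing $F=(\beta^2+\epsilon^2)^{(m-1)/2}\tilde F$, with $\tilde F$ bounded and analytic and $m-1$ even so the factor is polynomial, reduces the problem to $m=1$, where no poles occur. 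Hence $r_m$ is analytic.

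The main obstacle is the uniform analytic control: the substitution $\alpha\mapsto\beta$ must be analytic jointly in $s$, the cutoff must not destroy analyticity, and the infinite Taylor sum must be shown to converge to analytic $p_m$ and $r_m$. I would handle this by complexifying $s$ in a small disk and verifying that the explicit formulas for $I_{m,j}$ are bounded uniformly there.
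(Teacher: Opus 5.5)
Your main argument is correct, and it takes a genuinely different route from the paper's. The paper argues by monodromy. Continuing $t_s$ once around $t$ forces the $\theta_s$-contour to wind around the coalescing zeros $\theta_\pm(t_s)$. The jump $\mathcal C q_m$ is then a Pochhammer integral over a fixed contour, hence analytic at $t_s=t$, and $p_m=-\frac{1}{2\pi\bi}\mathcal C q_m$ makes $r_m$ single-valued.

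You instead build the jointly analytic normal form $\rho^2=a(s)\bigl(\beta^2+c(s)^2s^2\bigr)$ and integrate term by term. Your completing-the-square construction via the implicit function theorem is sound. Your claims about $I_{m,j}$ also check out. After $\beta=\epsilon u$, the logarithm comes from the $u^{-1}$ term in the large-$u$ expansion of $u^{2j}(1+u^2)^{-m/2}$, which exists iff $2j\ge m-1$. The finite part carries the factor $\epsilon^{2j+1-m}$.

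Your route is longer and needs the summation bookkeeping you describe, but it gives something the paper leaves implicit: an explicit bound $m-1$ on the pole order. That bound is what justifies calling the single-valued $r_m$ meromorphic rather than merely analytic in a punctured disk. The paper's route, in exchange, is coordinate-free and gives $p_m$ directly as a contour integral.

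The gap is in your final step. The hypothesis $f=\mathcal O(\rho^{m-1})$ does not give $F=(\beta^2+\epsilon^2)^{(m-1)/2}\tilde F$ with $\tilde F$ analytic. It only says that $F$ vanishes to order $m-1$ at $(s,\beta)=(0,0)$, and vanishing order is not divisibility by a quadratic form.

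This fails exactly where the theorem is used. Take $m=3$ and $f=\tau_0$. The leading part of $\tau_0$ is a multiple of the second fundamental form evaluated at $(s,\alpha)$. Away from umbilic points this is not proportional to the first fundamental form $|s\br_t+\alpha\br_\theta|^2$. So your $\tilde F$ behaves like $\beta^2/(\beta^2+c^2s^2)$, which is bounded but not analytic at the origin, and the $m=1$ case cannot be invoked. Your discarded ansatz $\rho^2=Q(1+h)$ with analytic $h$ has the same defect, though you do not rely on it.

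The repair is short within your framework. Vanishing to order $m-1$ means every Taylor term of $F$ of total degree below $m-1$ is zero. Hence $F_{2j}(s)=\mathcal O(s^{m-1-2j})$ for $2j<m-1$, which exactly cancels the pole of $C_{m,j}\bigl(c(s)s\bigr)^{-(m-1-2j)}$.

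Alternatively, use the paper's argument. The integrand is $\mathcal O(\rho^{-1})$, so $q_m$ is locally integrable in $t_s$, and so is the logarithmic term. Therefore the meromorphic $r_m$ cannot have a pole at $t_s=t$.
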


\begin{proof}[Proof]
By Taylor's expansion of $\rho^2$ at $(t,\theta)$ and then using standard
perturbation theory, it can be seen that $\rho^2(t_s,\theta_s)$, for each fixed
$t_s$, has two zeros $\theta_\pm(t_s)$, closest to $\theta$, satisfying
\begin{equation}\label{eq:theta-pm-local}
\theta_\pm(t_s)-\theta =
\left( -\frac{\br_t\cdot\br_\theta}{|\br_\theta|^2} \pm \bi\frac{|\br_t\times\br_\theta|}{|\br_\theta|^2} \right)(t_s-t) + \mathcal O\!\left((t_s-t)^2\right),
\end{equation}
where we assume ${\rm Im}(\theta_+(t_s))>0$. Note that the two zeros are complex conjugates for real $t_s$, and coalesce for $t_s=t$. 

For a multivalued function $F(t_s)$, let $\widetilde F(t_s)$ denote the value
obtained by analytically continuing $t_s$ once counterclockwise around $t$,
and define the correction operator
\begin{equation*}
\mathcal C F(t_s):=\widetilde F(t_s)-F(t_s)
\end{equation*}
For example,  $\mathcal C\left[\log(t_s-t)^2\right](t_s)=4\pi \bi$. For $q_m(t_s)$, the original integration path, the line segment from
$0$ to $2\pi$, passing through the two singularities  $\theta_\pm(t_s)$, is denoted by $\Gamma_0$ in \eqref{eq:qm-general} as shown in Figure~\ref{fig:log-contours}(a). 

\begin{figure}[htbp]
\small
\centering

\begin{subfigure}[b]{0.32\textwidth}
\centering
\resizebox{\linewidth}{!}{%
\begin{tikzpicture}[>={Kite[inset=0, length=10, bend]},scale=0.8]
\begin{axis}[
xmin=-1, xmax=7, ymin=-3, ymax=3,
axis lines=center,
clip=false,
xlabel=$\Re\theta_s$,
ylabel=$\Im\theta_s$,
label shift=10pt,
axis equal image,
trig format plots=rad,
axis line style={
    -{Stealth[length=4pt,width=5pt]},
    line width=0.8pt
},
xtick={0,6.283185307179586},
xticklabels={$0$,$2\pi$},
hide obscured x ticks=false
]

\addplot[only marks, mark=*, blue] coordinates {
(3.14,1)(3.14,-1)
};
\node[right] at (3.14,1) {$\theta_+(t_s)$};
\node[right] at (3.14,-1) {$\theta_-(t_s)$};

\addplot[
black,
line width=1,
postaction={decorate},
decoration={
markings,
mark=at position 0.25 with {\arrow{>}},
mark=at position 0.75 with {\arrow{>}}
}
] coordinates {
(0,0)(6.28,0)
};
\node[above] at (5.25,0.05) {$\Gamma_0$};

\addplot[
red,
domain=0:2*pi,
samples=180,
line width=1,
postaction={decorate},
decoration={
markings,
mark=at position 0.1 with {\arrow{>}},
mark=at position 0.3 with {\arrow{>}},
mark=at position 0.5 with {\arrow{>}},
mark=at position 0.7 with {\arrow{>}},
mark=at position 0.9 with {\arrow{>}}
}
]
({x+2*sin(x)-sin(3*x)},{-2*sin(2*x)});

\node[red] at (5.15,2.25) {$\Gamma_1(t_s)$};

\end{axis}
\end{tikzpicture}%
}
\caption{\strut}
\end{subfigure}\hfill%
\begin{subfigure}[b]{0.32\textwidth}
\centering
\resizebox{\linewidth}{!}{%
\begin{tikzpicture}[>={Kite[inset=0, length=10, bend]},scale=0.8]
\begin{axis}[
xmin=-1, xmax=7, ymin=-3, ymax=3,
axis lines=center,
clip=false,
xlabel=$\Re\theta_s$,
ylabel=$\Im\theta_s$,
label shift=10pt,
axis equal image,
trig format plots=rad,
axis line style={
-{Stealth[length=4pt,width=5pt]},
line width=0.8pt
},
xtick={0,6.283185307179586},
xticklabels={$0$,$2\pi$},
hide obscured x ticks=false
]

\addplot[only marks, mark=*, blue] coordinates {
(3.14,1)(3.14,0)(3.14,-1)
};
\node[right] at (3.14,1) {$\theta_+(t_s)$};
\node[right] at (3.14,0) {$\theta_b$};
\node[right] at (3.14,-1) {$\theta_-(t_s)$};

\addplot[
black,
domain=0:pi,
samples=120,
line width=1,
postaction={decorate},
decoration={
markings,
mark=at position 0.10 with {\arrow{>}},
mark=at position 0.50 with {\arrow{>}},
mark=at position 0.999 with {\arrow{>}}
}
]
({3.14-sin(3*x)},{-2*sin(2*x)});

\addplot[
black,
domain=pi:2*pi,
samples=120,
line width=1,
postaction={decorate},
decoration={
markings,
mark=at position 0.50 with {\arrow{>}},
mark=at position 0.90 with {\arrow{>}}
}
]
({3.14-sin(3*x)},{-2*sin(2*x)});

\node at (5.05,2.25) {$D(t_s)$};

\end{axis}
\end{tikzpicture}%
}
\caption{\strut}
\end{subfigure}\hfill
\begin{subfigure}[b]{0.32\textwidth}
\centering
\resizebox{\linewidth}{!}{%
\begin{tikzpicture}[>={Kite[inset=0, length=10, bend]},scale=0.8]
\begin{axis}[
xmin=-1, xmax=7, ymin=-3, ymax=3,
axis lines=center,
clip=false,
xlabel=$\Re\theta_s$,
ylabel=$\Im\theta_s$,
label shift=10pt,
axis equal image,
trig format plots=rad,
axis line style={
    -{Stealth[length=4pt,width=5pt]},
    line width=0.8pt
},
xtick={0,6.283185307179586},
xticklabels={$0$,$2\pi$},
hide obscured x ticks=false
]

\addplot[only marks, mark=*, blue] coordinates {
(3.14,1)(2.14,0)(3.14,-1)
};
\node[right] at (3.14,1) {$\theta_+(t_s)$};
\node[right] at (2.2,0) {$\theta_b$};
\node[right] at (3.14,-1) {$\theta_-(t_s)$};

\addplot[
black,
domain=0:2*pi,
samples=120,
line width=1,
postaction={decorate},
decoration={
markings,
mark=at position 0.1 with {\arrow{>}},
mark=at position 0.4 with {\arrow{>}},
mark=at position 0.6 with {\arrow{>}},
mark=at position 0.9 with {\arrow{>}}
}
]
({3.14-cos(x)},{-2*sin(x)});

\node at (5.00,2.25) {$P(t_s)$};

\end{axis}
\end{tikzpicture}%
}
\caption{\strut}
\end{subfigure}

\vspace{-12pt}
\caption{The three integration paths: $\Gamma_1(t_s)$, and two closed curves $D(t_s)$ and $P(t_s)$, starting and ending at $\theta_b$.}
\label{fig:log-contours}
\vspace{-20pt}
\end{figure}

As $t_s$ winds once counterclockwise around $t$, 
$\theta_\pm(t_s)$ do likewise
by \eqref{eq:theta-pm-local}. To avoid crossing $\theta_\pm$,
the integration path must be deformed accordingly.
It becomes a path $\Gamma_1(t_s)$ that starts at $0$, positively encircles $\theta_-$ once, positively encircles $\theta_+$ once, then negatively encircles $\theta_-$ once, and finally negatively encircles $\theta_+$ once, ending at $2\pi$ (see Figure~\ref{fig:log-contours}(a)). Compared to  $\Gamma_0$, this deformation introduces an additional term:
\begin{align*}
\mathcal Cq_m(t_s)
=
\int_{\theta_b}^{(\theta_-+,\theta_++,\theta_--,\theta_+-)}
[\rho^2(t_s,\theta_s)]^{-\frac m 2}f(t_s,\theta_s)\,d\theta_s,
\end{align*}
which is the well-known Pochhammer double-loop contour integral (see Figure \ref{fig:log-contours}(b)). Here, 
fix  $\theta_b\in(0,2\pi)$ away from $\theta_\pm(t_s)$ and continue the principal branch along the contour.
Since the singularities at $\theta_\pm$ are of square root type, we have
\[
\int_{\theta_b}^{(\theta_-+,\theta_-+)}[\rho^2(t_s,\theta_s)]^{-\frac m 2}f(t_s,\theta_s)\,d\theta_s=\int_{\theta_b}^{(\theta_--,\theta_--)}[\rho^2(t_s,\theta_s)]^{-\frac m 2}f(t_s,\theta_s)\,d\theta_s=0.
\]
Therefore,
\begin{align*}
\mathcal Cq_m(t_s)
=
\int_{\theta_b}^{(\theta_-+,\theta_++,\theta_-+,\theta_++)}
[\rho^2]^{-\frac m 2}f(t_s,\theta_s)\,d\theta_s
=
2\int_{\theta_b}^{(\theta_-+,\theta_++)}
[\rho^2]^{-\frac m 2}f(t_s,\theta_s)\,d\theta_s,
\end{align*}
and the above simplified path is shown in Figure~\ref{fig:log-contours}(c). As $t_s\to t$ (i.e., $\theta_+\to\theta$), the fixed deformation moves away from the singularity. Hence, $\mathcal Cq_m(t_s)$ is analytic at $t_s = t$.
Define $p_m(t_s) := -\frac{1}{2\pi\bi}\mathcal
Cq_m(t_s)$, which is analytic in $V$. Then,
\begin{equation*}\label{eq:log-monodromy}
\mathcal C\left[-\frac12p_m(t_s)\log(t_s-t)^2\right](t_s)
=
-\frac12p_m(t_s)\mathcal C\left[\log(t_s-t)^2\right](t_s)
=\mathcal Cq_m(t_s).
\end{equation*}
Consequently, 
$
r_m(t_s)
=
q_m(t_s)
+\frac12p_m(t_s)\log(t_s-t)^2
$
is a single-valued and meromorphic function in $V$, which implies \eqref{eq:log-splitting-general}.

Moreover, if $ f(t_s,\theta_s)=\mathcal O\!\left(\rho^{m-1}\right)$, then
$[\rho^2(t_s,\theta_s)]^{-m/2}f=\mathcal O(\rho^{-1})$. As $\rho^{-1}$ is locally integrable in
$(t_s,\theta_s)$, $q_m(t_s)$ is locally integrable near $t$.
Since the logarithmic term in \eqref{eq:log-splitting-general} is also
locally integrable, $r_m$ cannot have a nonremovable pole at $t$.
Therefore $r_m$ is analytic in $V$.
\end{proof}

For the moment, we assume $\partial\Omega$ is not a torus in the sense that
$\br(t,\theta)$ is not periodic w.r.t $t$, i.e., $\br(L,\theta)\neq
\br(0,\theta)$. This ensures that the two curves $t=L$ and $t=0$ are
well-separated. 
Write
$
\frac{e^{\bi k\rho}}{\rho}
=\frac{\cos(k\rho)}{\rho}
+\bi\,\frac{\sin(k\rho)}{\rho}.
$
Since $\cos(k\rho)$ and $\sin(k\rho)/\rho$ are analytic functions of $\rho$ near $\rho=0$, the first term is covered by Theorem~\ref{thm:logsing} with $m=1$, while the second term is analytic. 
Thus, the only singularity of $S^\phi(t,t_s,\theta)$ in the $t_s$ variable occurs at $t_s=t$ and is logarithmic. We
adopt a similar quadrature developed in \cite[Section 5]{LaiONeil2019} that deals with
logarithmically singular integrals on generating curves for axisymmetric surfaces.
For completeness, we sketch the basic idea below.

First, the parameter domain $[0,L]$ is uniformly divided into $N_p$ panels $I_i
= [t_i,t_{i+1}]$ with $t_i = \frac{i-1}{N_p} L$ for $i=1,\cdots, N_p+1$. Inside each
panel $I_i$, $t$ is sampled at the $16$ Legendre points
$\{t_{i}^{l}\}_{l=1}^{16}$. Thus, the total number of sampling points for $t$ is
$16N_p$. Suppose $t=t_j^{l}$ in an interior panel $I_j$ for $j\in[2,N_p-1]$ and
$l\in [1,16]$; the case when $t$ belongs to the two boundary panels $I_1$ and
$I_{N_p}$ can be treated exactly in the same way. Thus, we decompose $S[\phi]$ into
three terms
\begin{align}  \label{eq:S123}
  S[\phi](t_j^{l},\theta) &= \left[ \sum_{|i-j|\geq 2}+ \sum_{|i-j|=1} + \sum_{i=j} \right]\int_{I_i} S^\phi(t_j^l,t_s,\theta)dt_s =: S_{1} + S_{2} + S_{3}.
\end{align}
In the above, $S_1$ is sufficiently smooth as $t_s$ lies in the panels that are
well separated from $I_j$. Thus, it is natural to directly use the $16$-point
Gauss-Legendre quadrature to approximate
\begin{equation}  \label{eq:S1}
  S_1 \approx \sum_{|i-j|\geq 2} \sum_{q=1}^{16} S^\phi(t_j^{l},t_i^{q},\theta)w_i^q,
\end{equation}
where $w_i^q$ are the corresponding weights in panel $i$.

As for $S_2$ and $S_3$ involving the adjacent panels $I_{j\pm 1}$ and the
self-interaction panel $I_j$, we adopt the generalized Gauss quadrature rule
\cite{bregimrok10} to discretize the integrals, considering the nearly singular
or weakly singular integrand $S^\phi(t_j^l,t_s,\theta)$. In our numerical
examples, we use the 16th-order generalized Gauss quadrature rule, which contains 16
support nodes and weights for both the adjacent and self-interaction panels;
note that the nodes and weights vary with $l$ in $t_j^l$ and none of the nodes
is equal to $t_j^l$. Here, we do not follow \cite{LaiONeil2019} that used 48 support
nodes and weights, though independent of $l$, for the adjacent panels $I_{j\pm
  1}$, as this requires three times the number of evaluations of
$S^{\phi}(t,t_s,\theta)$ using 16 moving nodes and weights. The nodes and
weights are produced based on the Fortran code at github.com/JamesCBremerJr/GGQ
and are evaluated for $t$ in one panel only, since the nodes and weights for $t$
in a different panel $I_{j'}$ can be obtained by simply applying a linear
transformation to those for $I_j$.
\begin{remark}
  For a torus-like surface with ${\bm r}(0,\theta)={\bm r}(L,\theta)$, the two
  boundary panels $I_1$ and $I_{N_p}$ become adjacent. Thus, $S_2$ in
  equation~\eqref{eq:S123} should contain the index $i$ with $|i-j|=N_p-1$, whereas $S_1$
  should not.
\end{remark}

\subsection{Fast discretization of \texorpdfstring{$S^\phi(t,t_s,\theta)$}{Sphi(t,ts,theta)}}\label{sec32}
In the following, we develop a fast Fourier transform (FFT) based
algorithm to rapidly discretize $S^\phi(t,t_s,\theta)$.

According to the previous
section, $t$ and $t_s$ are collocated at two disjoint sets of nodes,
i.e., it always holds that $t\neq t_s$ in the discretization.
When $t$ and $t_s$ are in well-separated panels, it is
safe and accurate to regard the integrand in equation~\eqref{eq:Sphi:para} as a sufficiently
smooth periodic function so that one directly uses the trapezoidal rule to
approximate
\begin{equation}  \label{eq:Sphi:sm}
  S^\phi(t,t_s,\theta) \approx \frac{1}{2M}\sum_{l=0}^{2M-1}\frac{e^{\bi k \rho(t,\theta,t_s,\theta_l)}}{\rho(t,\theta,t_s,\theta_l)}\tilde{\phi}(t_s,\theta_l),
\end{equation}
where $\theta_l = \frac{\pi l}{M}, l=0,\cdots, 2M-1$ for some positive integer
$M$ depending on the desired accuracy. When the surface
$\Gamma$ is analytic, the above approximation attains spectral accuracy of error
${\cal O}(e^{-c M})$ for some constant $c>0$.

When $t$ and $t_s$ are in adjacent panels or even in the same panel, the
sampling points for $t$ and the nodes for $t_s$ can be sufficiently close yet
not equal to each other. In this case, the integrand in equation~\eqref{eq:Sphi:para} is
close to singular at $\theta_s=\theta$. The consequence is that
\eqref{eq:Sphi:sm} requires an extremely large number $M$ of discretization
points to resolve the singularity. To tackle this difficulty, we make use of the
Fourier series of $\tphi$ to approximate the integral. Throughout this paper, we
denote for any $2\pi$-periodic function $f$, its $m$-th Fourier coefficient by
\[  \Fr{f}_m := \frac{1}{2\pi}\int_{0}^{2\pi} f(\theta)e^{-\bi m \theta}d\theta,
\]
for $m\in\mathbb{Z}$. For analytic $\Gamma$, $\tilde{\phi}$ is
expected to be $2\pi$-periodic and
analytic w.r.t. $\theta_s$. Therefore,  we can use a truncated Fourier series
of length $2M$ to approximate
\[  \tphi(t_s,\theta_s) = \sum^{\infty}_{m=-\infty}\Fr{\tphi}_m(t_s) e^{\bi m
    \theta_s} \approx \sum^{M-1}_{m=-M}\Fr{\tphi}_m(t_s) e^{\bi m
    \theta_s},
\]
where $M$ depends on a prescribed accuracy.
Thus,
\begin{align}  \label{eq:Sphi:appr1}
  {\cal S}^\phi(t,t_s,\theta) \approx \sum^{M-1}_{m=-M}\Fr{\tphi}_m(t_s)\Fr{\frac{e^{\bi k \rho}}{\rho}}_{-m}(t,\theta, t_s).
\end{align}
If $t$ and $t_s$ are in well-separated panels, the Fourier coefficients of
$\frac{e^{\bi k \rho}}{\rho}$ can be approximated by using the trapezoidal rule and
evaluated by the FFT algorithm; note that this in fact reproduces
equation~\eqref{eq:Sphi:sm} by the discrete Parseval's relation. Nevertheless, due to the
singularity of $\rho^{-1}$, the trapezoidal rule breaks down and hence FFT is
not applicable. We approximate
\begin{align}  \label{eq:Sphi:appr2}
  {\cal S}^\phi(t,t_s,\theta)&\approx\sum^{M-1}_{m=-M}\Fr{\tphi}_m(t_s)\sum^{M_0-1}_{n=-M_0}\Fr{\cos(k \rho)}_n(t,\theta,t_s)\Fr{\rho^{-1}}_{-(m+n)}(t,\theta, t_s)\nonumber\\
  &+\bi\sum^{M-1}_{m=-M}\Fr{\tphi}_m(t_s)\Fr{\frac{\sin(k \rho)}{\rho}}_{-m}(t,\theta, t_s).
\end{align}
In the above, we use the truncated Fourier series of length $2M_0$ to
approximate the analytic function $\cos(k\rho)$. In practice, $M_0\leq M$ and
depends only on $k$ and the surface $\Gamma$; in our numerical examples, we choose $M_0=M$ to avoid possible loss of accuracy. To
proceed, we encounter an essential question: how to efficiently evaluate the
Fourier coefficients $\Fr{\rho^{-1}}_{j}$ for $|j|\leq M+M_0$? Due to the highly
oscillatory term $e^{\bi(m+n)\theta_s}$, we require at least $O(m+n)$ points to
accurately evaluate each $\Fr{\rho^{-1}}_{-(m+n)}$. Therefore, it totally
requires ${\cal O}(M^2)$ operations to evaluate all the Fourier coefficients for
some fixed $t$, $\theta$, and $t_s$ with $t\neq t_s$. In the following, we
propose two fast algorithms to evaluate the ${\cal O}(M)$ Fourier coefficients of $\rho^{-1}$. 
\subsubsection{An ${\cal O}(M)$ approach}
\label{subsubsec:linear}
Since $\rho^2(t,\theta,t_s,\theta_s)$ is
$2\pi$-periodic w.r.t. $\theta_s$, we have
\[  \rho^2(t,\theta,t_s,\theta_s) = \sum_{n=-\infty}^{+\infty}\Fr{\rho^2}_n(t,\theta,t_s) e^{\bi n\theta_s}.
\]
For $t\neq t_s$,
\begin{align*}  0 &= \int_{0}^{2\pi} \frac{d}{d\theta_s}(2e^{\bi m\theta_s}\rho)d\theta_s 
  =\int_{0}^{2\pi}  \frac{e^{\bi m\theta_s}}{\rho}\left(2 \bi m\rho^2 + (\rho^2)'\right)d\theta_s\\
  &=\bi\sum_{n=-\infty}^{+\infty}\left(2 m + n\right)\Fr{\rho^2}_n\int_{0}^{2\pi}  \frac{e^{\bi (m+n)\theta_s}}{\rho}d\theta_s
  =2\pi\bi\sum_{n=-\infty}^{+\infty}(2m+n)\Fr{\rho^2}_n\Fr{\rho^{-1}}_{-(m+n)}.
\end{align*}
We thus get the following lemma.

\begin{lemma}
  Let $t\neq t_s$. For any $m\in\mathbb{Z}$,
  \begin{align}    \label{eq:iter:formula}
    \sum_{n=-\infty}^{+\infty}(2m+n)\Fr{\rho^2}_n(t,\theta,t_s)\Fr{\rho^{-1}}_{-(m+n)}(t,\theta,t_s) &= 0.
\end{align}
\end{lemma}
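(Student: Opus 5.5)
The plan is to make rigorous the formal computation displayed just before the lemma. Fix $t\neq t_s$ and $\theta$, and regard $\rho(\theta_s)=\rho(t,\theta,t_s,\theta_s)$ as a function of $\theta_s$ alone.

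\emph{Step 1: smoothness and periodicity.} Since $t\neq t_s$, assumption (i) implies that $\br(t,\theta)\neq\br(t_s,\theta_s)$ for all $\theta_s$. The surface is simple, and the curves $\theta_s\mapsto\br(t_s,\theta_s)$ for fixed $t_s\neq t$ do not pass through the target point $\br(t,\theta)$. Hence $\rho^2$ is a smooth, $2\pi$-periodic, strictly positive function of $\theta_s$. This gives two consequences. First, $\rho=\sqrt{\rho^2}$ and $\rho^{-1}$ are smooth and periodic. Second, $\rho^2$ has absolutely summable Fourier coefficients (rapidly decaying, in fact), and $\rho^{-1}$ is bounded with Fourier coefficients that are likewise rapidly decaying.

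\emph{Step 2: the vanishing integral.} The function $h(\theta_s)=2e^{\bi m\theta_s}\rho(\theta_s)$ is smooth and $2\pi$-periodic, so $\int_0^{2\pi}h'(\theta_s)\,d\theta_s=h(2\pi)-h(0)=0$. We differentiate using $2\rho'=(\rho^2)'/\rho$, which is valid because $\rho>0$:
\[h'=e^{\bi m\theta_s}\rho^{-1}\bigl(2\bi m\rho^2+(\rho^2)'\bigr).\]

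\emph{Step 3: insert the Fourier series.} We write
\[2\bi m\rho^2+(\rho^2)'=\bi\sum_n(2m+n)\Fr{\rho^2}_ne^{\bi n\theta_s}.\]
This series converges absolutely and uniformly, because $\sum|n||\Fr{\rho^2}_n|<\infty$ by smoothness of $\rho^2$. Uniform convergence lets us multiply by the bounded function $e^{\bi m\theta_s}\rho^{-1}$ and integrate term by term. Each resulting term is
\[\int_0^{2\pi}e^{\bi(m+n)\theta_s}\rho^{-1}\,d\theta_s=2\pi\Fr{\rho^{-1}}_{-(m+n)},\]
which follows directly from the definition of the Fourier coefficient. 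Dividing the identity from Step 2 by $2\pi\bi$ then gives \eqref{eq:iter:formula}.

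\emph{Main point of care.} The only delicate step is justifying the interchange of sum and integral, together with the convergence of the series in \eqref{eq:iter:formula} itself. Both rest on Step 1, namely that $\rho$ does not vanish when $t\neq t_s$. That nonvanishing is exactly why the hypothesis $t\neq t_s$ is needed. When $t=t_s$, $\rho^{-1}$ is singular at $\theta_s=\theta$, and $h$ is not differentiable there.
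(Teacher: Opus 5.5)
Your proof is correct and follows the paper's argument: the paper's proof is exactly the displayed computation $0=\int_0^{2\pi}\frac{d}{d\theta_s}(2e^{\bi m\theta_s}\rho)\,d\theta_s$ followed by inserting the Fourier series of $\rho^2$, and you supply the justification the paper leaves implicit (smoothness of $\rho^{\pm 1}$ for $t\neq t_s$, uniform convergence, termwise integration). One small correction: assumption (i) by itself only gives $\rho>0$ at $\theta_s=\theta$, so positivity for all $\theta_s$ rests on the paper's tacit assumption that the surface parametrization is injective, which is what your phrase ``the surface is simple'' is really invoking.
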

Since $\rho^2$ is analytic with respect to $\theta_s$, $\Fr{\rho^2}_n$
decays exponentially as $|n|$ increases. Therefore, we may truncate the series
in equation~\eqref{eq:iter:formula} as
\begin{equation}  \label{eq:ld}
\sum_{n=-m_0}^{m_0}(2m+n)\Fr{\rho^2}_n(t,\theta,t_s)\Fr{\rho^{-1}}_{-(m+n)}(t,\theta,t_s) \approx 0,
\end{equation}
for a properly chosen integer $m_0$. In practice, $m_0$ is chosen such that, for
$(t,\theta,t_s)$ ranging over all grid points of $\Gamma$,
\begin{equation}  \label{eq:crt:M0}
\frac{|\Fr{\rho^2}_{\pm m_0}(t,\theta,t_s)|}{\max_{|n|\leq
    m_0}|\Fr{\rho^2}_{n}(t,\theta,t_s)|}\leq \epsilon_0,
\end{equation}
for a specified error threshold $ \epsilon_0$ (e.g., $\epsilon_0=10^{-14}$). Clearly, $m_0$
depends only on $\Gamma$ and we expect $m_0\ll M$. The $2m_0+1$-term recurrence relation in equation~\eqref{eq:ld} serves as the key for effectively evaluating the Fourier coefficients $\{\Fr{\rho^{-1}}_{j}\}$ for $|j|\leq M+M_0$.

As $\Fr{\rho^{-1}}_{-l}=\overline{\Fr{\rho^{-1}}_{l}}$, it suffices to
evaluate $\Fr{\rho^{-1}}_l$, $l=0,\cdots, M+M_0$. By equation~\eqref{eq:ld}, these unknowns are governed by
\begin{equation}  \label{eq:lin:diff}
  \sum_{j=0}^{2m_0} a_j(i) \Fr{\rho^{-1}}_{i+j} = 0,\quad i=0,\cdots, M_1,
\end{equation}
where $M_1=M+M_0-2m_0$ and the coefficients $\{a_j(i)\}_{j=0}^{2m_0}$ are defined as
\begin{align}  \label{eq:def:a_l}
  a_j(i) = (m_0+2i+j)\Fr{\rho^2}_{m_0-j}.
\end{align}
 The linear difference equation gives rise to $M_1+1$ linear equations governing the
$M+M_0+1$ unknowns $\Fr{\rho^{-1}}_j$ for $j=0,\cdots, M+M_0$. Thus, to solve
this linear system of equations, we have to add $2m_0$ extra conditions. One
simple approach is to precompute the first $m_0$ terms and the last $m_0$ terms, i.e.,
\begin{align}  \label{eq:offline:terms}
  \Fr{\rho^{-1}}_0,\cdots, \Fr{\rho^{-1}}_{m_0-1}\ {\rm and}\ \Fr{\rho^{-1}}_{M+M_0-m_0+1},\cdots, \Fr{\rho^{-1}}_{M+M_0}.
\end{align}
In the implementation, we directly apply the adaptive Gauss-Legendre quadrature to compute these coefficients.
Then, equation~\eqref{eq:lin:diff} gives rise to the following linear system
\begin{equation}  \label{eq:Axb}
    {\bf A} {\bf x} = {\bf b},
\end{equation}
  where ${\bf A}$ is an $(M_1+1)\times (M_1+1)$ matrix
  with elements
  \[
    A_{ij} = \left\{
      \begin{array}{ll}
      a_{j-i+m_0}(i) & {\rm if}\quad|j-i|\leq m_0;\\
      0 & {\rm otherwise},
      \end{array}
  \right.\quad i,j=0,\cdots M_1,
  \]
the $M_1+1$ column vector ${\bf b}$ is related to the
  precomputed terms in equation~\eqref{eq:offline:terms}, and the unknowns
  \begin{equation}    \label{eq:root:x}
    {\bf x} = [\Fr{\rho^{-1}}_{m_0},\cdots, \Fr{\rho^{-1}}_{M+M_0-m_0}]^{T}.
\end{equation}
  By solving the linear system in equation~\eqref{eq:Axb}, we obtain all required Fourier coefficients.
  It can be seen that the above approach requires only ${\cal O}(M)$ operations to evaluate the Fourier coefficients $\Fr{\rho^{-1}}_{-(m+n)}$ for any fixed $t$, $\theta$, and $t_s$ with $t\neq t_s$.

  The following lemma provides a sufficient condition for the solvability of equation~\eqref{eq:Axb}.
\begin{lemma}
  The $2m_0+1$-banded coefficient matrix ${\bf A}$ is Hermitian. If $\rho^2$ meets the following condition
  \begin{equation}    \label{eq:cond:rho}
    2\Fr{\rho^2}_0 > \sum_{l=-\infty}^{\infty}\left|\Fr{\rho^2}_l  \right|,
\end{equation}
  then ${\bf A}$ is strictly positive definite and hence nonsingular.
  \begin{proof}
    For $|j-i|\leq m_0$,
    \begin{align*}      A_{ji} &= a_{i-j+m_0}(j) =(m_0+2j+(i-j+m_0)) \Fr{\rho^2}_{m_0-(i-j+m_0)}\\
      &= (2m_0+i+j)\Fr{\rho^2}_{j-i} = \overline{A_{ij}},
\end{align*}
    so that ${\bf A}$ is Hermitian. On the other hand, for $i=0,\cdots, M_1$,
    \begin{align*}      &\sum_{j=i-m_0}^{i+m_0} |a_{j-i+m_0}(i)| - 2 |a_{m_0}(i)|= \sum_{j=i-m_0}^{i+m_0} (2m_0+i+j)|\Fr{\rho^2}_{i-j}| - 2 (2m_0+2i)|\Fr{\rho^2}_0|\\
      &=\sum_{j=-m_0}^{m_0} (2m_0+2i+j)|\Fr{\rho^2}_{j}| - 2 (2m_0+2i)\Fr{\rho^2}_0\\
      &=(2m_0+2i)\left[\sum_{j=-m_0}^{m_0} |\Fr{\rho^2}_{j}| -  2\Fr{\rho^2}_0  \right] < 0.
\end{align*}
    Thus, ${\bf A}$ is diagonally dominant and hence strictly positive definite,
    considering that all diagonal elements of ${\bf A}$ are positive.
  \end{proof}
\end{lemma}
\begin{remark}
For axisymmetric surfaces, \cite{LaiONeil2019} used a three-term recurrence
relation to evaluate the Fourier coefficients of $\rho^{-1}$. This can be simply
explained via equation~\eqref{eq:iter:formula}. Specifically, for an axisymmetric
surface $\Gamma$, we may parameterize it by
\[  \br(t,\theta) = \{[r(t)\cos\theta, r(t)\sin\theta, z(t)]: 0\leq t\leq L, 0\leq
  \theta\leq 2\pi\}.
\]
Then, it is easy to derive that
\begin{align*}  \rho^2(t,\theta,t_s,\theta_s) &= r^2(t) + r^2(t_s) + (z(t) - z(t_s))^2 - 2r(t)r(t_s)\cos(\theta-\theta_s),
\end{align*}
indicating that $\Fr{\rho^2}_n\equiv 0$ for $|n|\geq 2$ and hence $m_0=1$. Clearly, $\rho^2$ satisfies \eqref{eq:cond:rho}. Since $\rho^2$ depends only on $\cos(\theta-\theta_s)$, 
\begin{equation}  \label{eq:axial}
  \Fr{\rho^{-1}}_m(t,\theta,t_s) = e^{\bi m\theta}\Fr{\rho^{-1}}_m(t,0,t_s).
\end{equation}
Thus, it suffices to evaluate Fourier coefficients of $\rho^{-1}$ for
$\theta=0$. The above demonstrates the essential feature for the FFT-accelerated
algorithm in \cite{LaiONeil2019}.
\end{remark}

\subsection{\texorpdfstring{An $\mathcal{O}(M\log M)$ approach}{An O(M log M) approach}}
\label{subsubsec:OMlogM}
If $M$ is not extremely large, we find the following singularity
swapping approach more promising for $t$ close to $t_s$
\cite{afKlintebergBarnett2021}. The key idea is to  extract the nearest complex
singularity directly from the geometry and then evaluate all Fourier
coefficients by one FFT convolution. Fix $t,\theta,t_s$, and write for short
\begin{equation}\rho^2(\theta_s)= \left[ x(t,\theta)-x(t_s,\theta_s) \right]^2
+ \left[ y(t,\theta)-y(t_s,\theta_s) \right]^2
+ \left[ z(t,\theta)-z(t_s,\theta_s) \right]^2.
\label{eq:rho}
\end{equation}
Recall $\theta_\pm=a\pm\ii b, b>0$, defined in \eqref{eq:theta-pm-local}. $\theta_+$ is
numerically located by Newton's method with the initial guess
$\theta_+^0=a_0+\ii b_0$, where $a_0$ is a real minimizer of $\rho^2$ and
$b_0=\sqrt{2\rho^2(a_0)/[\rho^2]''(a_0)}$.
The following lemma extracts the most singular factor of $\rho^2(\theta_s)$.

\begin{lemma}
\label{singular1}
Suppose ${\bm r}(t,\theta)$ is analytic in $\theta$, and the roots $\theta_\pm =
a\pm \bi b$
of $\rho^2$ are simple.
Let
\begin{equation}s_{a,b}(\zeta)
=
1+e^{-2b}-2e^{-b}\cos(\zeta-a).
\label{eq:sab}
\end{equation}
Then $s_{a,b}(\zeta)$ has the same pair of simple roots, and
\begin{equation}F(\zeta)
=
\left(\frac{s_{a,b}(\zeta)}{\rho^2(\zeta)}\right)^{1/2}
\label{eq:F-smooth}
\end{equation}
becomes holomorphic at $\zeta=a\pm \bi b$.
\begin{proof}
Since
$
s_{a,b}(\zeta)
=
\bigl(1-e^{\ii(\zeta-a)-b}\bigr)
\bigl(1-e^{-\ii(\zeta-a)-b}\bigr),
$
$\theta_\pm$ are exactly its two simple roots. Near
$\theta_+$,
$
\rho^2(\zeta)=(\zeta-\theta_+)\rho^2_1(\zeta),
s_{a,b}(\zeta)=(\zeta-\theta_+)s_1(\zeta),
$
with $\rho^2_1(\theta_+)s_1(\theta_+)\neq0$. Hence $s_{a,b}/\rho^2=s_1/\rho^2_1$
is analytic and nonzero at $\theta_+$, and similarly at ${\theta_-}$. Therefore,
$F(\zeta)$ admits an analytic continuation across both zeros.
\end{proof}
\end{lemma}
Lemma~\ref{singular1} indicates that $F$ can be accurately represented by a
short Fourier expansion
\begin{equation}F(\theta_s)
\approx
\sum_{j=-p}^{p}\FC{F}_j e^{\ii j\theta_s},
\label{eq:F-expansion}
\end{equation}
where $p$ can be chosen by a similar rule for $m_0$ according to
\eqref{eq:crt:M0},
and is independent of $M$.
As $\rho^{-1}=F/\sqrt{s_{a,b}}$, \eqref{eq:F-expansion} yields
\begin{align}\FC{\rho^{-1}}_l
&=\frac{1}{2\pi}\int_0^{2\pi}
\frac{F(\theta_s)e^{-\ii l\theta_s}}
{\sqrt{s_{a,b}(\theta_s)}}\,d\theta_s \nonumber\\
&\approx \sum_{j=-p}^{p}\FC{F}_j\,
\frac{1}{2\pi}\int_0^{2\pi}
\frac{e^{-\ii(l-j)\theta_s}}
{\sqrt{1+e^{-2b}-2e^{-b}\cos(\theta_s-a)}}\,d\theta_s \nonumber\\
&=\sum_{j=-p}^{p}\FC{F}_j\,
e^{-\ii(l-j)a}A_{|l-j|}(b),\qquad l=0,\cdots, M+M_0,
\label{eq:convolution}
\end{align}
where the change of variable $\phi=\theta_s-a$ is used in the last equality and
\begin{equation}
A_m(b):=\FC{s_{0,b}^{-1/2}}_m = \frac{1}{2\pi}\int_0^{2\pi}
\frac{e^{-\ii m\phi}}{\sqrt{1+e^{-2b}-2e^{-b}\cos\phi}}\,d\phi,
\label{eq:Am}
\end{equation}

According to \cite{LaiONeil2019},
$A_m$ are scaled half-integer Legendre functions,
\begin{equation}A_m(b)=\frac{e^{b/2}}{\pi}Q_{m-1/2}(\cosh b),
\label{eq:Am-Q}
\end{equation}
and can be evaluated using the stable ${\cal O}(M)$ recurrence algorithms in \cite{LaiONeil2019}. For $b\ll 1$, the differences $D_m=A_m-A_{m-1}$ satisfy
\begin{equation}D_{m+1}=\frac{2m-1}{2m+1}D_m+
\frac{8m\sinh^2(b/2)}{2m+1}A_m.
\label{eq:diff-recurrence}
\end{equation}
We find that it is numerically more stable to evaluate $D_m$ first and update $A_m$ next. Consequently, the discrete convolution in \eqref{eq:convolution} for computing $\FC{\rho^{-1}}_l, l=0,\cdots, M+M_0$ can be performed by one zero-padded FFT convolution in ${\cal O}(M\log M)$ operations.

Figure~\ref{AGM}
\begin{figure}[htb]
    \centering
    (a)\includegraphics[width=0.45\textwidth]{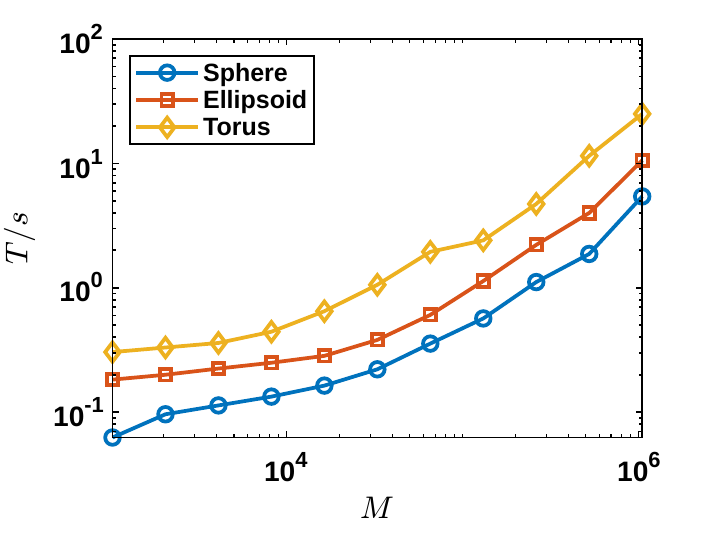}
    (b)\includegraphics[width=0.45\textwidth]{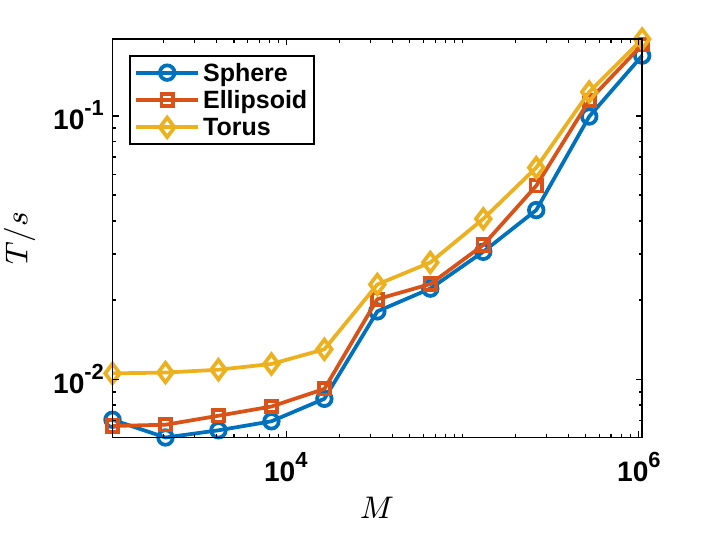}
   \caption{ CPU time $T$ for computing $\{\FC{\rho^{-1}}_l\}_{l=0}^{M}$ for a
   sphere, an ellipsoid, and a  twisted torus: (a) the ${\cal O}(M)$ approach; (b)
   the ${\cal O}(M\log M)$ approach. }
    \label{AGM}
\end{figure}
reports the CPU time $T$ for computing $\{\FC{\rho^{-1}}_l\}_{l=0}^{M}$ using the
two aforementioned approaches to attain the same level of accuracy, for an
axisymmetric unit sphere and two nonaxisymmetric geometries: an ellipsoid and a
twisted torus studied in Examples~1 and~4 of Section~\ref{sec5}, respectively.
Throughout this paper, all algorithms are implemented in MATLAB R2025b on a
machine equipped with an Intel Xeon CPU of 64 cores and 512 GB of RAM. In the
${\cal O}(M\log M)$ approach, we observe nearly identical running times for the
three different geometries; more importantly, the loss of axisymmetry introduces
no appreciable additional cost. Comparing the two different approaches, we find
the ${\cal O}(M\log M)$ approach is much faster even for $M\gg 1$ and we do not
intend to show the cross-over value of $M$ for the two approaches.

In practice, $M$ can be moderately large (around $10^2$ in our numerical
experiments). The previous ${\cal O}(M)$ approach, though optimal, possesses an
extremely large prefactor for numerically evaluating the highly oscillatory
integrals for the right tail indices; we wish to tackle this issue in the
future. Besides, as we shall see below, due to the convolutional form
\eqref{eq:Sphi:appr2}, the final computational complexity for discretizing
${\cal S}$ will always involve an ${\cal O}(M\log M)$ complexity term so that
this new ${\cal O}(M\log M)$ approach does not hurt the overall complexity at
all. 

\subsection{Final linear system}
With the above quadratures, the single-layer potential ${\cal S}[\phi]$ is fully
discretized in terms of the unknowns $\{\Fr{\tphi}_m(t_s)\}_{m=-M}^{M-1}$ when $t$ and $t_s$
are close enough. By the discrete Parseval's relation, the discrete inner
products in the Fourier space in equation~\eqref{eq:Sphi:appr2} are transformed to those
in the physical space so as to approximate $S^\phi$ in terms of
$\{\tphi(t_s,\theta_l)\}_{l=0}^{2M-1}$. To solve equation~\eqref{eq:bie:S}, one further
uses the method of interpolation to approximate $\tphi(t_s,\theta_l)$, say
$t_s\in I_i$, in terms of $\tphi(t_i^j,\theta_l)$ for $1\leq j\leq 16$, where we
recall $t_i^j$ are the Legendre points of panel $I_i$. By collocating ${\br}$ at
$\br(t_i^j,\theta_l)$ for $ 1\leq i\leq N_p, 1\leq j\leq 16, 0\leq l\leq 2M-1$,
Equation~\eqref{eq:bie:S} is discretized as
\begin{equation}  {\bm S}{\bm \phi} = {\bm g},
\end{equation}
where ${\bm \phi}$ and ${\bm g}$ denote the vectors of $\tphi$ and $g$ at the
$N=32MN_p$ points and ${\bm S}$ is a $N\times N$ matrix. Once $\phi$ is
approximated, we simply apply the 16-point Gauss-Legendre quadrature rule and
the trapezoidal rule to discretize the surface integral over $\Gamma$ in
equation~\eqref{eq:u:sl} to get $u({\bm r})$ for ${\bm r}\in\Omega$; in our numerical
computations, we always assume that ${\bm r}$ is sufficiently far away from the
surface $\Gamma$. For the close-to-surface evaluation, we refer readers to
\cite{BaoHuaLaiZhang2024,ZhuVeerapaneni2022} for more details.

To conclude this section, we discuss the complexity of constructing ${\bm S}$.
According to the number of panels $N_p$, ${\bm S}$ can be decomposed into $N_p\times
N_p$ block matrices: ${\bm S}_{ij}$ of size $(32M)\times (32M)$, relating to $t$
in panel $I_i$ and $t_s$ in panel $I_j$ in equation~\eqref{eq:S:para}, for $1\leq i,j\leq
N_p$. The whole procedure is divided into two stages: a preparation stage and a
correction stage. The preparation stage applies equations~\eqref{eq:S1} and
\eqref{eq:Sphi:sm} to construct each ${\bm S}_{ij}$ in ${\cal O}(M^2)$
operations for all $1\leq i,j\leq N_p$. The correction stage corrects entries of
each ${\bm S}_{ij}$ for $|i-j|\leq 1$ based on the faster algorithm for computing
$\{\Fr{\rho^{-1}}_{m}\}_{m=0}^{M+M_0}$ and the Fourier-to-physical
transformation by Parseval's relation. It is not hard to see that the complexity
of constructing each ${\bm S}_{ij}$ is ${\cal O}(M^2\log M)$. Consequently, the
total complexity of constructing ${\bm S}$ is ${\cal O}(M^2N_p\log M) + {\cal
O}(M^2N_p^2)$. In practice, $N_p\ll M$ and the entries in the preparation stage
are explicitly provided and can be initialized in a parallel way so that the
correction stage is in fact more time-consuming. Figure~\ref{pictimeS}
\begin{figure}[htb]
    \centering
    (a)\includegraphics[width=0.45\textwidth]{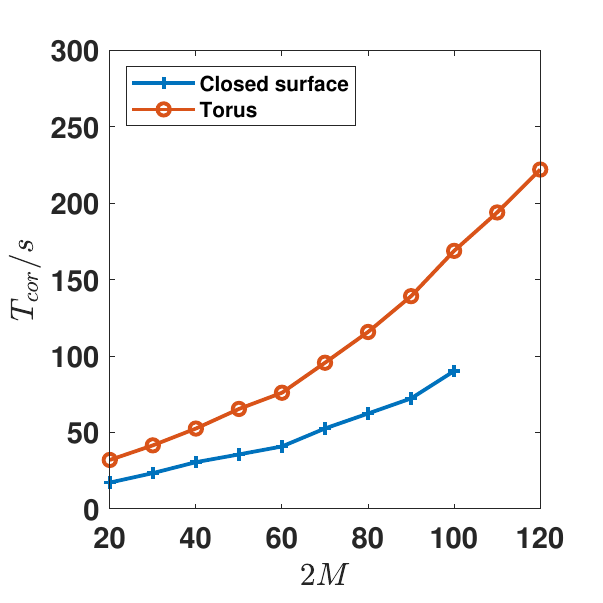}
    (b)\includegraphics[width=0.45\textwidth]{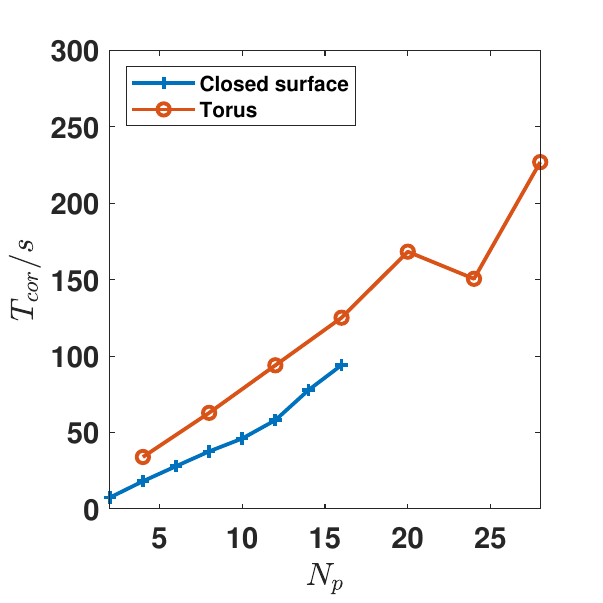}
    \caption{$T_{cor}$ against $2M$ and $N_p$: (a) $N_p$ is $28$ for 'o's and $16$ for '$+$'s; (b)  $2M$ is $120$ for 'o's and 100 for '$+$'s.}
    \label{pictimeS}
\end{figure}
 shows the running time $T_{cor}$ in the correction stage for different values of $2M$ and $N_p$ for two types of surfaces. They are a closed surface and a torus-like surface as studied in Examples 1 and 3 in Section~\ref{sec5}. The plots illustrate that $T_{cor}$ increases linearly with the number of panels $N_p$ and slightly superlinearly with $2M$.

\section{Discretization of ${\cal K}$ and its adjoint ${\cal K}'$ }\label{sec4}
We now turn to the discretization of ${\cal K}$ and ${\cal K}'$. Based on the
parameterization in equation~\eqref{eq:para},
the double-layer operator  in
equation~\eqref{eq:dl} and its adjoint ${\cal K}'$ in
equation~\eqref{eq:adl} can be written as
\begin{align*}{\cal K}[\psi](t,\theta)
&=
\int_{0}^{L}dt_s \int_0^{2\pi}
\frac{(\alpha(\rho)\tau_1
-\tau_3)(t,\theta,t_s,\theta_s)
-\bi\beta(\rho)\tau_0(t,\theta,t_s,\theta_s)}{2\pi}
\tilde{\psi}(t_s,\theta_s)\,d\theta_s,\\
{\cal K}'[\phi](t,\theta)
&=
\int_{0}^{L}dt_s \int_0^{2\pi}
\frac{(\alpha(\rho)\tau_1
-\tau_3)(t_s,\theta_s,t,\theta)
-\bi\beta(\rho)\tau_0(t_s,\theta_s,t,\theta)}{2\pi}
\tilde{\phi}(t_s,\theta_s)\,d\theta_s,
\end{align*}
where two analytic functions $\alpha$ and $\beta$ are defined by
\[
\alpha(\rho) = \frac{1-\cos(k\rho)-k\rho\sin(k\rho)}{\rho^2}, \quad \beta(\rho) = \frac{\sin(k\rho)-k\rho\cos(k\rho)}{\rho^3},
\]
$\tpsi(t,\theta)=\psi(t,\theta)J(t,\theta)$, and
\begin{align*}\tau_j(t,\theta,t_s,\theta_s)
&=
\frac{\nu(t_s,\theta_s)\cdot
\bigl(\br(t_s,\theta_s)-\br(t,\theta)\bigr)}
{\rho^j(t,\theta,t_s,\theta_s)},
\qquad j\in\{0,1,3\}.
\end{align*}
Due to the similarity between the two operators, we shall only present the
discretization of ${\cal K}$ in the following.

It thus suffices to
discretize
\[K^\psi(t,t_s,\theta)
:=
\int_0^{2\pi}\frac{1}{2\pi}
\left[
(\alpha(\rho)\tau_1
-\tau_3)(t,\theta,t_s,\theta_s)
-\bi\beta(\rho)\tau_0(t,\theta,t_s,\theta_s)
\right]
\tilde{\psi}(t_s,\theta_s)\,d\theta_s,
\]
with $t$ and $t_s$ located in two adjacent panels or in the same panel. For smooth surfaces, Taylor expansion at $(t,\theta)$ gives
\begin{equation}
    \label{eq:tau0:cond}
\tau_0(t,\theta,t_s,\theta_s)=\mathcal O(\rho^2) \qquad\text{as }(t_s,\theta_s)\to(t,\theta).
\end{equation}
The nonsmooth terms $\tau_1$ and $\tau_3$ in ${\cal K}$ are therefore of the
form~\eqref{eq:qm-general} with $m=1, 3$, with the analytic numerator $\tau_0$
satisfying \eqref{eq:tau0:cond}. Consequently, Theorem~\ref{thm:logsing} implies
that, after azimuthal integration, $K^\psi(t,t_s,\theta)$ has a logarithmic
singularity at $t_s=t$. Therefore, the previous generalized Gauss quadrature in
Section \ref{sec31} becomes applicable for the integral of $K^\psi$ w.r.t. $t_s$
from $0$ to $L$. We focus on the discretization of $K^\psi$ in the following.

As in Section~\ref{sec3}, we approximate $\tilde{\psi}$ by its truncated Fourier series of
length $2M$. Since $\alpha(\rho)$ and $\beta(\rho)\tau_0$ are analytic and
$2\pi$-periodic with respect to $\theta_s$, their Fourier expansions can be
truncated to a modest number, $2M_0$, of modes. We then obtain
\begin{align}K^\psi(t,t_s,\theta)
&\approx
\sum_{m=-M}^{M-1}\sum_{n=-M_0}^{M_0-1}
\Fr{\tilde{\psi}}_m(t_s)
\Fr{\alpha(\rho)}_n(t,\theta,t_s)
\Fr{\tau_1}_{-(m+n)}(t,\theta,t_s)
\nonumber\\
&-
\sum_{m=-M}^{M-1}
\Fr{\tilde{\psi}}_m(t_s)
\Fr{\tau_3+\bi {\beta(\rho)\tau_0}}_{-m}(t,\theta,t_s),
\label{eq:dis:K}
\end{align}
for integers $M$ and $M_0$ depending on the required accuracy. Thus, we need to
evaluate $\Fr{\tau_1}_{l}$ and $\Fr{\tau_3}_{l}$ for $l=-(M+M_0-2),\cdots, M+M_0$, as
discussed in the following.
As $\tau_j$ is real, $\Fr{\tau_j}_l = \overline{\Fr{\tau_j}_{-l}}$ so that we only
need to evaluate $\Fr{\tau_j}_l$ for $l=0,\cdots, M+M_0$.

For $\tau_1$, noticing that $\tau_0$ is analytic and
$2\pi$-periodic w.r.t. $\theta_s$, we have
\begin{align}  \label{eq:tau1}
  \Fr{\tau_1}_{l}(t,\theta,t_s) &= \sum_{n=-\infty}^{\infty} \Fr{\tau_0}_n(t,\theta,t_s)\Fr{\rho^{-1}}_{l-n}(t,\theta,t_s)\nonumber\\
  &\approx\sum_{n=-M_0}^{M_0} \Fr{\tau_0}_n(t,\theta,t_s)\Fr{\rho^{-1}}_{l-n}(t,\theta,t_s),\quad l=0,\cdots, M+M_0.
\end{align}

For $\tau_3$, we have
\begin{align}  \label{eq:tau3}
  \Fr{\tau_1}_{l}(t,\theta,t_s) &= \sum_{n=-\infty}^{\infty} \Fr{\tau_3}_{l-n}(t,\theta,t_s)\Fr{\rho^{2}}_{n}(t,\theta,t_s)\nonumber\\
  &\approx\sum_{n=-M_0}^{M_0} \Fr{\tau_3}_{l-n}(t,\theta,t_s)\Fr{\rho^{2}}_{n}(t,\theta,t_s),\quad l\in\mathbb{Z}.
\end{align}
One could follow the same ${\cal O}(M)$ approach in Section \ref{subsubsec:linear} to evaluate all
$\{\Fr{\tau_3}\}_{l=0}^{M+M_0}$ with the optimal ${\cal O}(M)$ complexity. However, as in Section \ref{subsubsec:OMlogM}, we introduce a numerically faster but theoretically ${\cal O}(M\log M)$ approach for evaluating $\FC{\tau_3}_l$ for a moderately large $M$.

Fix $t,\theta,t_s$ again, and recall the definitions of
$\rho^2(\theta_s)$, the nearest root $\theta_+=a+\ii b$ of $\rho^2$ in the upper
half-plane, $s_{a,b}(\theta_s)$, $F(\theta_s)$, and $A_m(b)$ in
Section~\ref{subsubsec:OMlogM}. Set
\begin{align}
\label{eq:H3}
H_3(\theta_s)&=\tau_0(\theta_s)F^3(\theta_s).
\end{align}
By Lemma~\ref{singular1}, $H_3(\theta_s)$ is analytic in the neighborhood of $\theta_s=a\pm \bi b$ as $\tau_0$ is analytic.

We consider $b$ sufficiently far away from $0$ first.  As $\tau_3 = {\tau_0}{(\rho^2)^{-3/2}} = {H_3}{s_{a,b}^{-3/2}}$,  using an approximate Fourier expansion of ${H_3(\theta_s)}$
gives
\begin{equation}\FC{\tau_3}_n \approx
\sum_{k=-p}^{p}
\FC{H_3}_k
\FC{s_{a,b}^{-3/2}}_{n-k},
\label{eq:tau3-standard}
\end{equation}
where $p$ again is determined only by a rule similar to \eqref{eq:crt:M0}.
Define $B_m(a,b):= \FC{s_{a,b}^{-3/2}}_m, m\in\mathbb{Z}$.  We
have 
\begin{prop}
\label{lem:Bm}
$B_m$ satisfy
\begin{equation}B_m(a,b)= e^{-\ii ma}B_{|m|}(0,b),
\quad
B_{-m}=\overline{B_m},\quad m\in\mathbb Z.
\label{eq:B3-recovery}
\end{equation}
For $m\geq 1$, $B_m$ satisfy the forward recurrence
\begin{align}B_m(a,b)
&=e^{-\bi a} \cosh b\,B_{m-1}(a,b) -\frac{2m-1}{2}e^{-\bi m a + b}A_{m-1}(b),
\label{eq:Bm-recurrence}
\end{align}
where $A_m$ is defined in \eqref{eq:Am}. In particular, the initial value is
\begin{equation}B_0(a,b)= \frac{4E(e^{-2b})}{\pi(1-e^{-2b})^2}
-\frac{A_0(b)}{1-e^{-2b}},
\label{eq:B0}
\end{equation}
where $E(x)=\int_0^{\pi/2}(1-x\sin^2\phi)^{1/2}\,d\phi$
denotes the complete elliptic integral of the second kind.
\begin{proof}
By the definition of $B_m$, one readily verifies \eqref{eq:B3-recovery}.
For $m\geq 1$,
\begin{align*}B_m(0,b)
&=\frac{1}{2\pi}\int_{-\pi}^{\pi}
s_{0,b}^{-3/2}e^{-\ii m\theta_s}\,d\theta_s\nonumber\\
&=\frac{1}{2\pi}\int_{-\pi}^{\pi}
s_{0,b}^{-3/2}e^{-\ii(m-1)\theta_s}
\cos\theta_s\,d\theta_s
+e^{b} \frac{\ii}{2\pi}\int_{-\pi}^{\pi}
\partial_{\theta_s} s_{0,b}^{-1/2}e^{-\ii(m-1)\theta_s}
\,d\theta_s,
\end{align*}
where we have used the identity $\partial_{\theta_s}s_{0,b}^{-1/2}
=-e^{-b}\sin\theta_s\,s_{0,b}^{-3/2}$.
Using $\cos\theta_s=\cosh b-\frac12e^b s_{0,b}(\theta_s)$ for the first integral and integration by parts for the second integral,
\begin{align*}B_m(0,b) &= \cosh b\,B_{m-1}(0,b) -\frac{1}{2}e^bA_{m-1}(b) - (m-1)e^{b} A_{m-1}(b)\\
&=\cosh b\,B_{m-1}(0,b) -\frac{2m-1}{2}e^bA_{m-1}(b).
\end{align*}
Multiplying both sides by $e^{-\bi m a}$ gives
\eqref{eq:Bm-recurrence}. For $m=0$,  the identity
\[(1-e^{-2b})s_{0,b}^{-3/2}=s_{0,b}^{-1/2}-2\partial_b s_{0,b}^{-1/2}
\]
gives
\[(1-e^{-2b})B_0(0,b)=A_0(b)-2A_0'(b) = A_0(b) - 2\left[A_0(b)-\frac{2E(e^{-2b})}{\pi(1-e^{-2b})}\right],
\]
which implies \eqref{eq:B0} by $B_0(a,b)=B_0(0,b)$. Note the last equality
follows from standard identities for complete elliptic integrals; see
\cite[Secs.~19.4 and 19.8]{DLMF}.
\end{proof}
\end{prop}
According to Proposition~\ref{lem:Bm}, all $B_m(a,b)$ are evaluated in ${\cal O}(M)$
operations.  Consequently, $\FC{\tau_3}_n$ can be evaluated in ${\cal O}(M\log
M)$ operations similarly to $\FC{\tau_1}_n$.

For sufficiently small $b$, the stronger singular factor
$s_{a,b}^{-3/2}$ amplifies the round-off errors in \eqref{eq:tau3-standard}. To
tackle this issue,  following the translated singularity swapping approach in
\cite{krantz2026stabilizing}, we instead use
\begin{equation}H_3(\theta_s)
\approx
H_3(a)+H_3'(a)\sin(\theta_s-a)
+
\sin^2\frac{\theta_s-a}{2}
\sum_{k=-(p-1)}^{p-1}d_k e^{\ii k\theta_s}.
\label{eq:H3-modified}
\end{equation}
Let
\begin{equation}\widetilde B_m(a,b) = \FC{ \sin^2\frac{\theta_s-a}{2}\, s_{a,b}^{-3/2} }_m,
\qquad m\in\mathbb Z,
\label{eq:B3-modified}
\end{equation}
and then
\begin{align}    \FC{\tau_3}_n
    &\approx H_3(a)B_n(a,b)
    +
    \frac{H_3'(a)}{2\ii}
    \left[
        e^{-\ii a}B_{n-1}(a,b)
        -
        e^{\ii a}B_{n+1}(a,b)
    \right]
    \nonumber\\
    &+
    \sum_{k=-(p-1)}^{p-1}
    d_k\widetilde B_{n-k}(a,b).
    \label{eq:tau3-modified}
\end{align}
Consequently, performing FFT convolutions twice, one for $\widetilde B_m$, gives
$\FC{\tau_3}_n$.

In summary, \eqref{eq:tau1} is adopted for $\tau_1$, while
\eqref{eq:tau3-standard} or \eqref{eq:tau3-modified} is adopted for $\tau_3$,
depending on the value of $b$; throughout this paper, we consider $b<0.01$ to be
sufficiently small. For each fixed $t,\theta,t_s$, the computational cost for
obtaining $\left\{\FC{\tau_j}_l\right\}_{l=0}^{M+M_0}, j=1,3,$ is ${\cal O}(M\log M)$.

\begin{remark}
The small quantity $\tau_0=O(\rho^2)$ should be evaluated carefully to
avoid significant cancellation errors, which can occur when
evaluating $\FC{\tau_3}_l$. To tackle this issue, we adopt the
technique developed in \cite{luluqia18}. Let
${\bm u}=(t,\theta)$, ${\bm v}=(t_s, \theta_s)$,
${\bm \delta}=[\delta t, \delta \theta]:={\bm u}-{\bm v}$, and
${\bm \gamma}(\lambda)={\bm v}+\lambda{\bm \delta}$. Since
$\bnu({\bm v})$ is orthogonal to the tangent plane, we write
\begin{align*} 
&\tau_0({\bm v})
    =
    \int_0^1(\lambda-1)\,
    \bnu({\bm v})\cdot
    \left[\br_{tt}({\bm \gamma}(\lambda))(\delta t)^2
+2\br_{t\theta}({\bm \gamma}(\lambda))
\delta t\,\delta\theta +
\br_{\theta\theta}({\bm \gamma}(\lambda))
(\delta\theta)^2\right] \,d\lambda,\\
    &\br({\bm v})-\br({\bm u})
    = -\int_0^1 D\br({\bm \gamma}(\lambda))[{\bm u}-{\bm v}]\,d\lambda.
    \label{eq:r-stable}
\end{align*}
These integrals can be accurately evaluated by $16$-point Gauss-Legendre rules.
\end{remark}

Similar to the single-layer operator ${\cal S}$, by applying the same quadrature
rule as discussed in Section~\ref{sec31}, the double-layer operator ${\cal K}$ can also
be approximated as an $N\times N$ matrix ${\bm K}$ by collocating both
$(t,\theta)$ and $(t_s,\theta_s)$ at $(t_i^j, \theta_l), i=1\cdots N_p,
j=1,\cdots 16, l=0,\cdots, 2M-1$. Therefore, \eqref{eq:bie:Kie} can be respectively approximated by
\[({\bm K} \mp {\bm I}){\bm \psi} = {\bm g}.
\]
One can similarly discretize the adjoint double-layer operator ${\cal K}'$ in
\eqref{eq:bie:Kpie}. We omit the details.

As in the last paragraph of Section~\ref{sec3},  we see that the correction
stages for discretizing ${\cal K}$ and ${\cal K}'$ have the same time complexity
${\cal O}\bigl(M^2\log M\bigr)$.
Figures~\ref{pictimeK}
\begin{figure}[htb]
    \centering
    \includegraphics[width=0.45\textwidth]{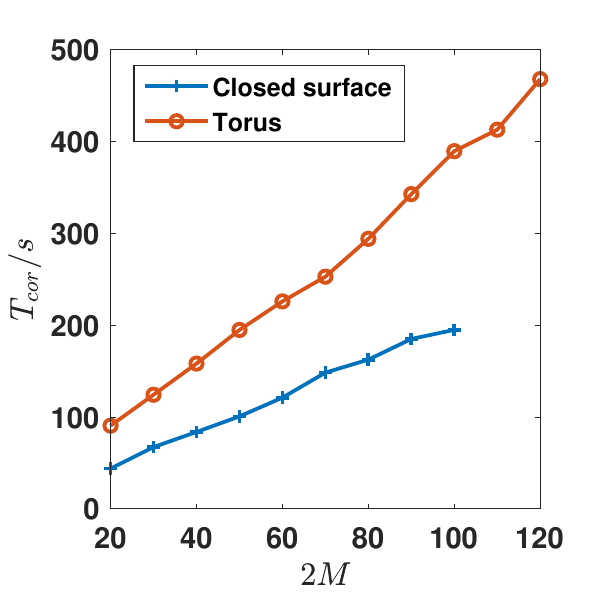}
    \includegraphics[width=0.45\textwidth]{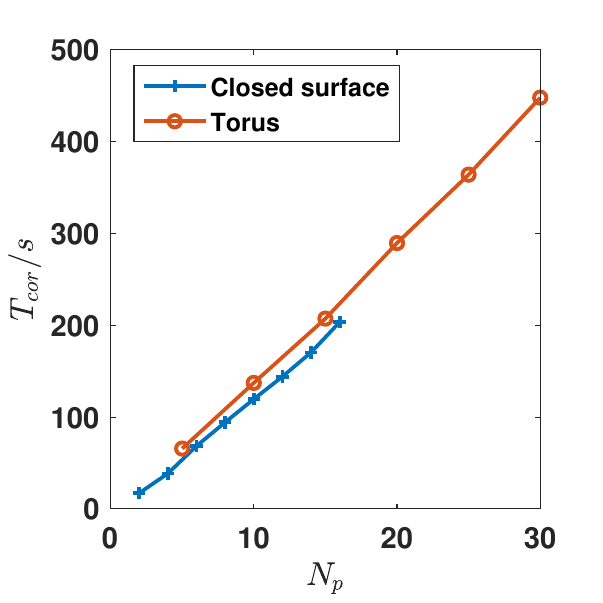}
    \caption{$T_{cor}$ against $2M$ and $N_p$ for ${\cal K}$: (a) $N_p$ is $30$ for 'o's and $16$ for '$+$'s; (b) $2M$ is $120$ for 'o's and $100$ for '$+$'s.}
    \label{pictimeK}
\end{figure}
and~\ref{pictimeKp}
\begin{figure}[htb]
    \centering
    \includegraphics[width=0.45\textwidth]{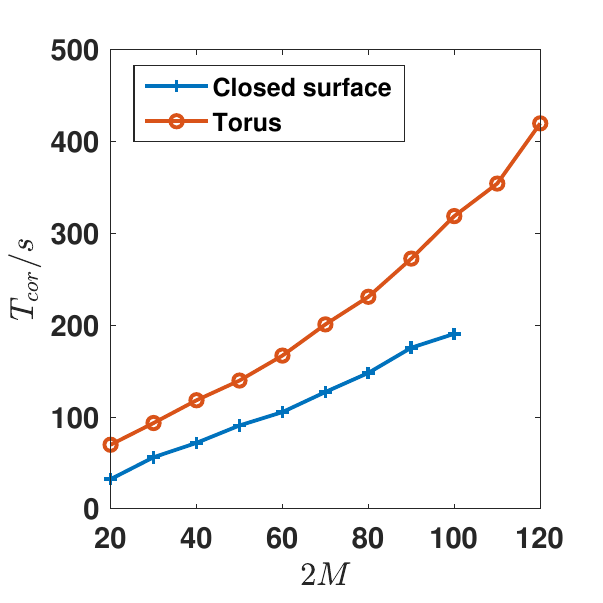}
    \includegraphics[width=0.45\textwidth]{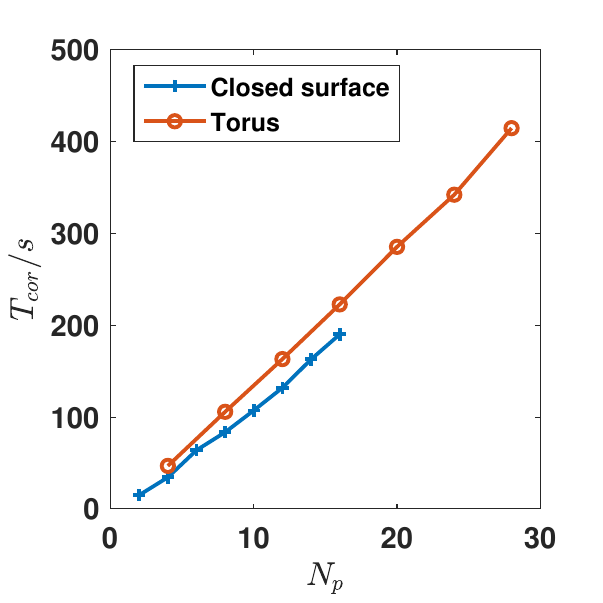}
    \caption{$T_{cor}$ against $2M$ and $N_p$ for ${\cal K}'$: (a) $N_p$ is $30$ for 'o's and $16$ for '$+$'s; (b) $2M$ is $120$ for 'o's and $100$ for '$+$'s.}
    \label{pictimeKp}
\end{figure}
show the running time $T_{cor}$  in the correction stage for the two surfaces mentioned for Figure~\ref{pictimeS} for several values of $2M$ and $N_p$. The increasing rates of $T_{cor}$ with $2M$ and $N_p$ in the plots still are consistent with the expected time complexity.

\section{Numerical examples}\label{sec5}

In this section, we carry out several experiments to check the accuracy of the
proposed discretization schemes for ${\cal S}$, ${\cal K}'$, and ${\cal K}$.
Without loss of generality, we study the interior problems (IDP) and (INP) only.
Let $\br_*$ be a specified source point located in
$\Omega_e$ and let
  \[
    g(\br) = \left\{
      \begin{array}{ll}
      G(\br;\br_*) & {\rm (IDP)},\\
      \partial_{\nu(\br)}G(\br;\br_*) & {\rm (INP)},\\
      \end{array}
  \right.
  \]
for $\br\in\Gamma$. Clearly, $u_{\rm exa}(\br) = G(\br;\br_*)$ is the exact
solution for the two problems (IDP) and (INP) and shall be used to compute the
numerical error
\[  E_{j} = \frac{||{\bm u}^j_{\rm num} - {\bm u}_{\rm
      exa}||_{2}^{\Omega_T}}{\#({\bm u}_{\rm exa})},
\]
for the corresponding integral operator $j={\cal S}, {\cal K}, {\cal K}'$. Here $\Omega_T$ is the testing region where the corresponding numerical solutions $u^j_{\rm num}$
are evaluated; ${\bm u}^j_{\rm num}$ and ${\bm u}_{\rm exa}$ are the vectors of
$u^j_{\rm num}$ and the exact solution $u_{\rm exa}$ at the grid points in
$\Omega_T$, respectively; $\#$ indicates the size of the vector, and $||\cdot||_{2}^{\Omega_T}$ denotes the
$l^{2}$-norm of the vector.

In all examples, we choose the source point ${\bm r}_*= [3,4,10.5]^{T}\in \Omega_e$ and $\Omega_T$ is chosen to be a ball $|{\bm r}-{\bm r}_0|< \delta$ with its center ${\bm r}_0$ and radius $\delta$ given by:

\begin{description}
\item[Example 1.] $\bm r_0=[0,-0.2,0.1]^T$ and $\delta=0.2$;
\item[Example 2.] $\bm r_0=[0.2,-0.2,0.1]^T$ and $\delta=0.2$;
\item[Example 3.] $\bm r_0=[2.75,-0.2,0.1]^T$ and $\delta=0.2$;
\item[Example 4.] $\bm r_0=[4.75,-0.2,0.1]^T$ and $\delta=0.1$.
\end{description}

\noindent {\bf Example 1: An ellipsoid.} The surface $\Gamma$ is characterized by the following equation:
\[\frac{x^2}{1^2} + \frac{y^2}{2^2} + \frac{z^2}{3^2} = 1.
\]

\begin{table}[ht]
\centering
\begin{tabular}{ c|c|c|c|c|c|c}
\hline
 $k$ & $(N_p,2M)$  & $E_{\cal S}$ & $(N_p,2M)$  & $E_{\cal K}$ & $(N_p,2M)$  & $E_{{\cal K}'}$ \\\hline
1 & $(4,30)$ & 6.4E-11 & $(4,40)$ & 7.6E-12 & $(4,40)$ & 1.3E-11\\\hline
2 & $(4,30)$ & 1.2E-10 & $(4,40)$ & 1.6E-11 & $(4,40)$ & 9.3E-12 \\\hline
4 & $(4,30)$ & 8.4E-10 & $(4,40)$ & 1.2E-10 & $(4,40)$ & 2.6E-10 \\\hline
8 & $(8,60)$ & 9.7E-11 & $(8,60)$ & 2.3E-11& $(8,60)$ & 2.2E-10 \\\hline
16 & $(16,100)$ & 9.6E-10 & $(16,100)$& 7.2E-11 & $(16,100)$ & 8.0E-12\\\hline
\end{tabular}
\caption{Example 1: Error $E_j, j={\cal S}, {\cal K}$ and ${\cal K}'$ for different values of $k$.}
\label{tab:ex1}
\end{table}

Table~\ref{tab:ex1} shows the accuracy of ${\cal S}$, ${\cal K}$ and ${\cal K}'$ for specific values of $2M$ and $N_p$ such that the errors are less than $10^{-9}$. Figure~\ref{picconv1}
\begin{figure}[htb]
    \centering
    \includegraphics[width=0.32\textwidth]{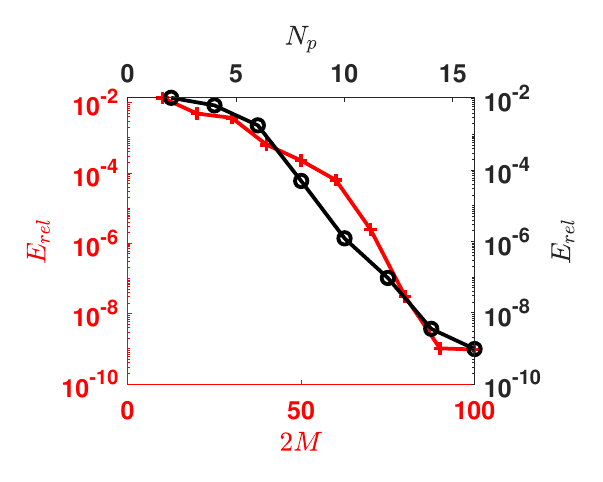}
    \includegraphics[width=0.32\textwidth]{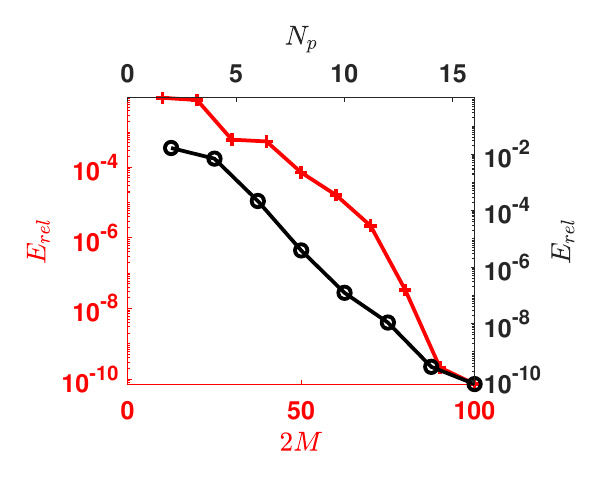}
    \includegraphics[width=0.32\textwidth]{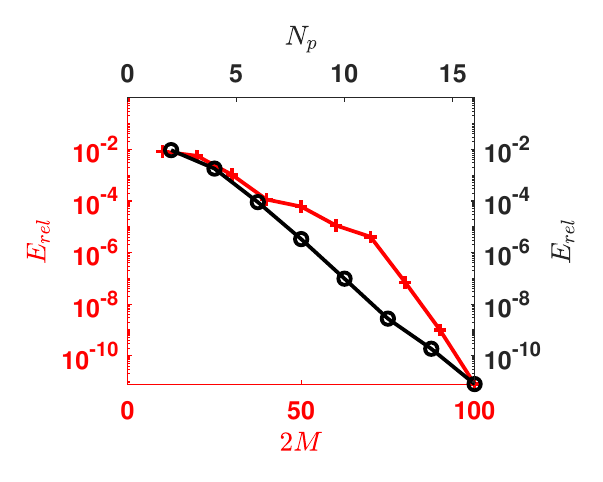}
    \caption{Example 1: Convergence curves of ${\cal S}$, ${\cal K}$ and ${\cal K}'$ for several values of $2M$ and $N_p$ for $k=16$: $N_p$ is $16$ for all '$+$'s and $2M=100$ for all 'o's.}
    \label{picconv1}
\end{figure}
plots the discretization errors of ${\cal S}$, ${\cal K}$ and ${\cal K}'$ for different values of $2M$ and $N_p$ for $k=16$. With the vertical axis logarithmically scaled, it can be seen that the errors decay exponentially as $M$ and $N_p$ increase. Figure~\ref{picfield1}
\begin{figure}[htb]
    \centering
    \includegraphics[width=0.32\textwidth]{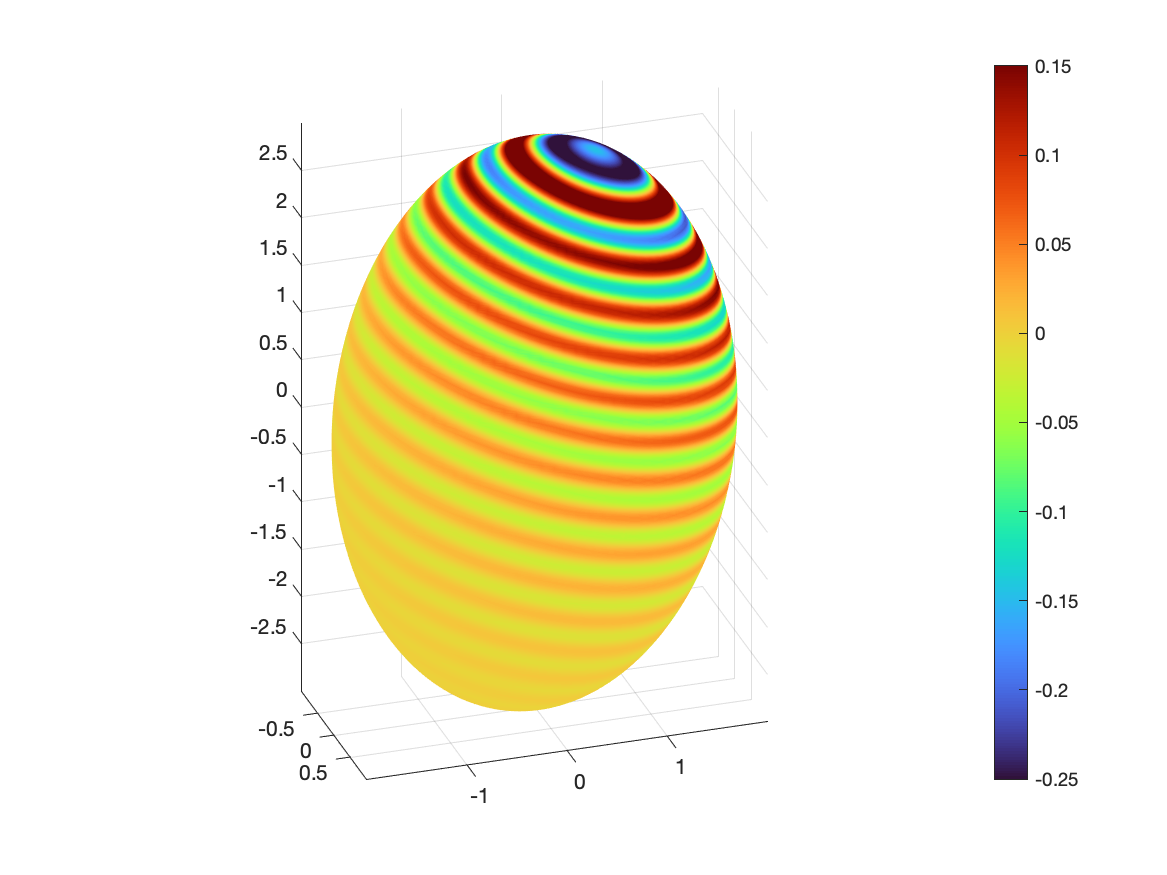}
    \includegraphics[width=0.32\textwidth]{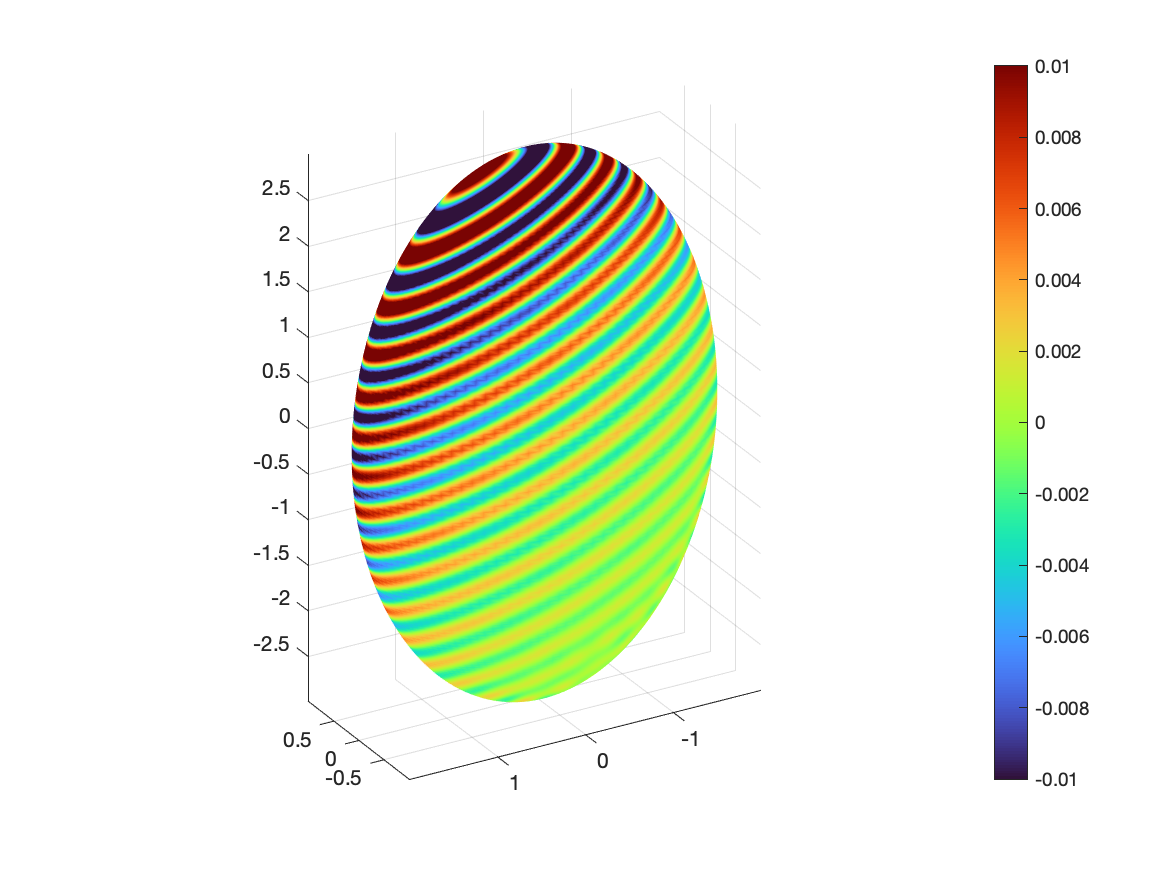}
    \includegraphics[width=0.32\textwidth]{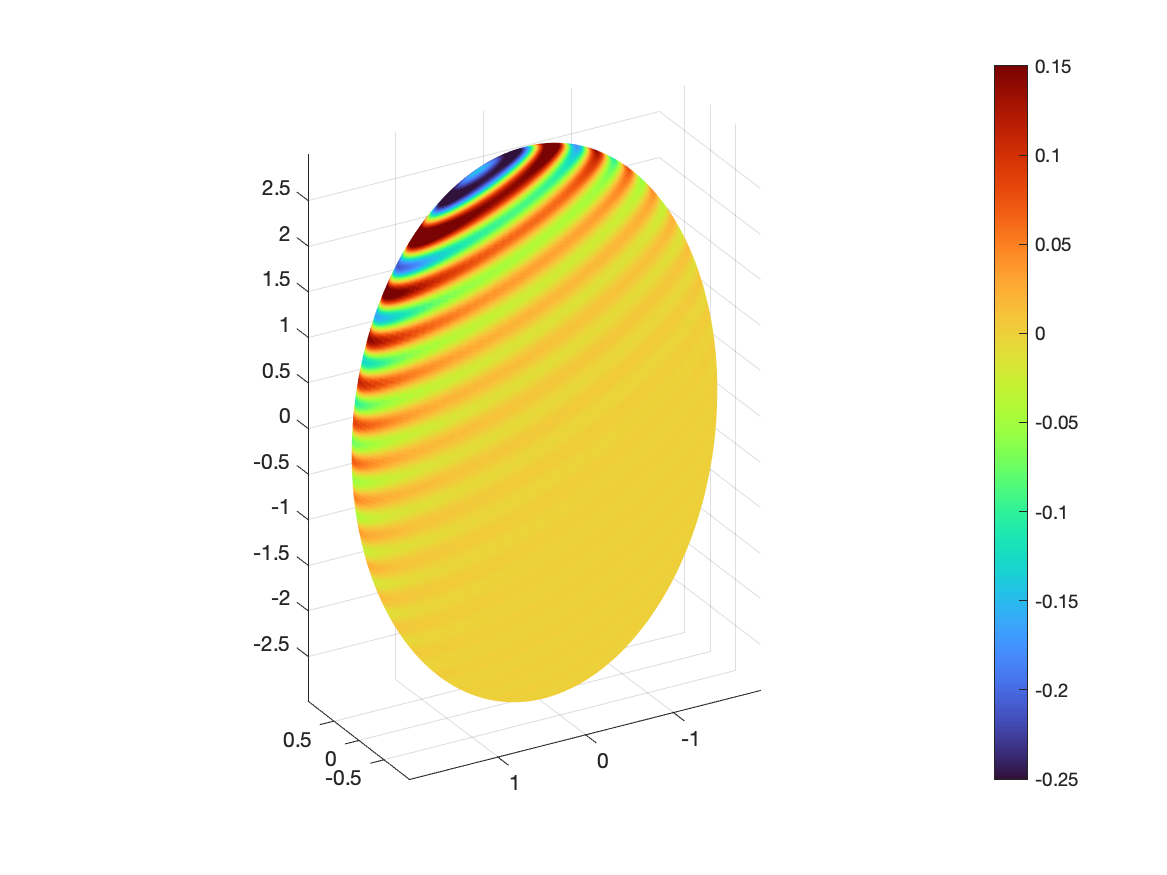}
    \caption{Example 1: Real parts of $\phi$ and $\psi$ on $\Gamma$ for $k=16$ for ${\cal S}$ (left), ${\cal K}$ (middle) and ${\cal K}'$ (right); $N_p$ and $2M$ are chosen according to the last row in Table~\ref{tab:ex1}.}
    \label{picfield1}
\end{figure}
plots the real parts of the unknown density functions $\phi$ in equations~\eqref{eq:bie:S} and~\eqref{eq:bie:Kpie} and $\psi$ in equation~\eqref{eq:bie:Kie} on $\Gamma$.

\noindent {\bf Example 2: A bean-shaped surface}.
The equation of the surface is:
\[\frac{x^2}{0.64(1-0.1\cos(\pi z))} + \frac{(y+0.3\cos(\pi z))^2}{0.64(1-0.4\cos(\pi z))} + z^2 = 1.
\]

Table~\ref{tab:ex2} shows the accuracy of ${\cal S}$, ${\cal K}$ and ${\cal K}'$ for specific values of $2M$ and $N_p$ such that the errors are less than $10^{-9}$.
\begin{table}[ht]
\centering
\begin{tabular}{ c|c|c|c|c|c|c}
\hline
 $k$ & $(N_p,2M)$  & $E_{\cal S}$ & $(N_p,2M)$  & $E_{\cal K}$ & $(N_p,2M)$  & $E_{{\cal K}'}$ \\\hline
1 & $(4,30)$ & 1.0E-12 & $(4,40)$ & 9.1E-13 & $(4,40)$ & 8.3E-13\\\hline
2 & $(4,30)$ & 1.6E-12 & $(4,40)$ & 1.3E-12 & $(4,40)$ & 2.1E-12 \\\hline
4 & $(4,30)$ & 1.1E-11 & $(4,40)$ & 9.5E-12 & $(4,40)$ & 2.2E-12 \\\hline
8 & $(8,60)$ & 5.2E-12 & $(8,60)$ & 4.8E-13 & $(8,60)$ & 1.0E-11 \\\hline
16 & $(16,100)$ & 4.7E-12 & $(16,100)$ & 3.8E-13 & $(16,100)$ & 1.3E-11\\\hline
\end{tabular}
\caption{Example 2: Error $E_j, j={\cal S}, {\cal K}$ and ${\cal K}'$ for different values of $k$.}
\label{tab:ex2}
\end{table}
Figure~\ref{picconv2}
\begin{figure}[htb]
    \centering
    \includegraphics[width=0.32\textwidth]{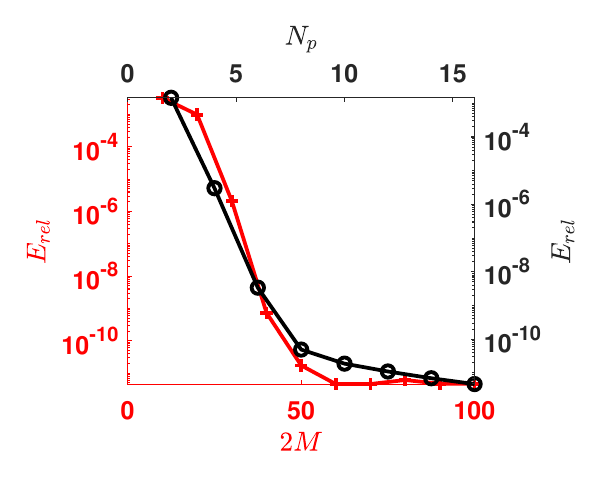}
    \includegraphics[width=0.32\textwidth]{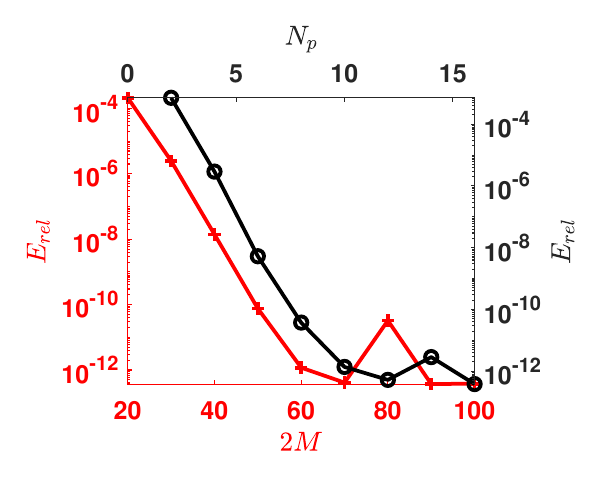}
    \includegraphics[width=0.32\textwidth]{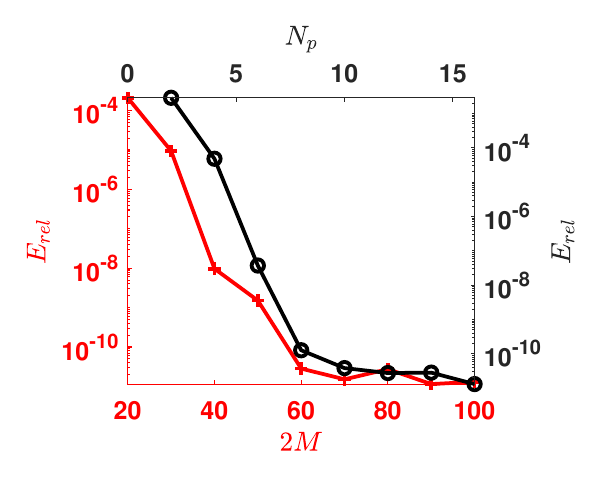}
    \caption{Example 2: Convergence curves of ${\cal S}$, ${\cal K}$ and ${\cal K}'$ for several values of $2M$ and $N_p$ for $k=16$: $N_p$ is $16$ for all '$+$'s and $2M=100$ for all 'o's.}
    \label{picconv2}
\end{figure}
plots the discretization errors of ${\cal S}$, ${\cal K}$ and ${\cal K}'$ for different values of $2M$ and $N_p$ for $k=16$. Figure~\ref{picfield2}
\begin{figure}[htb]
    \centering
    \includegraphics[width=0.32\textwidth]{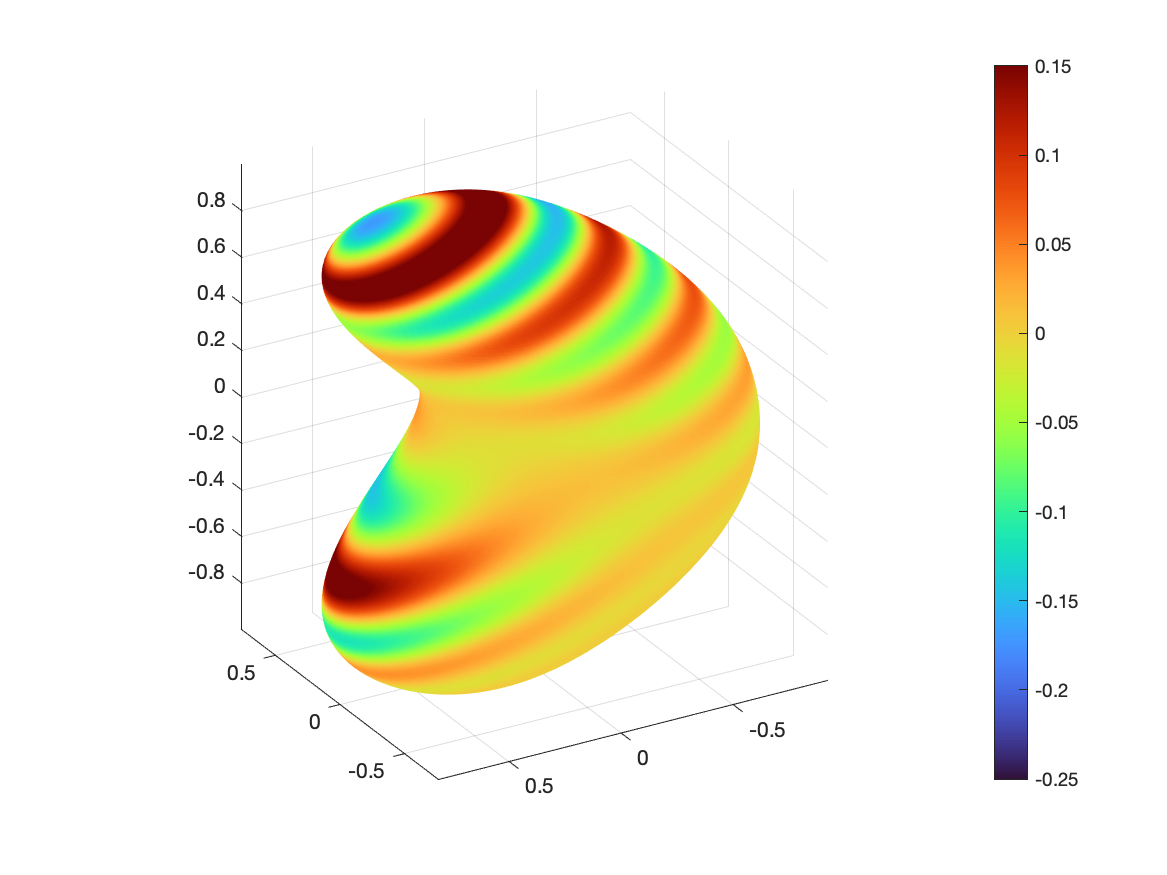}
    \includegraphics[width=0.32\textwidth]{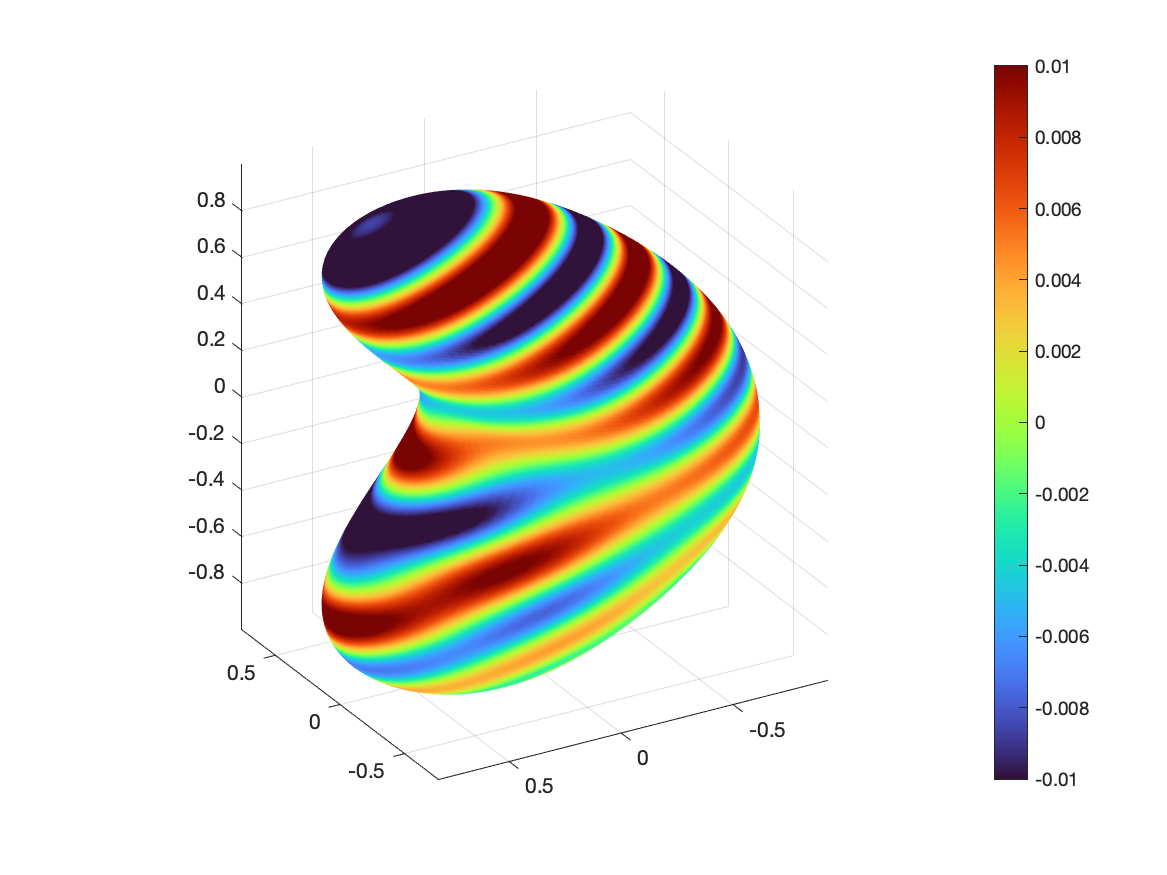}
    \includegraphics[width=0.32\textwidth]{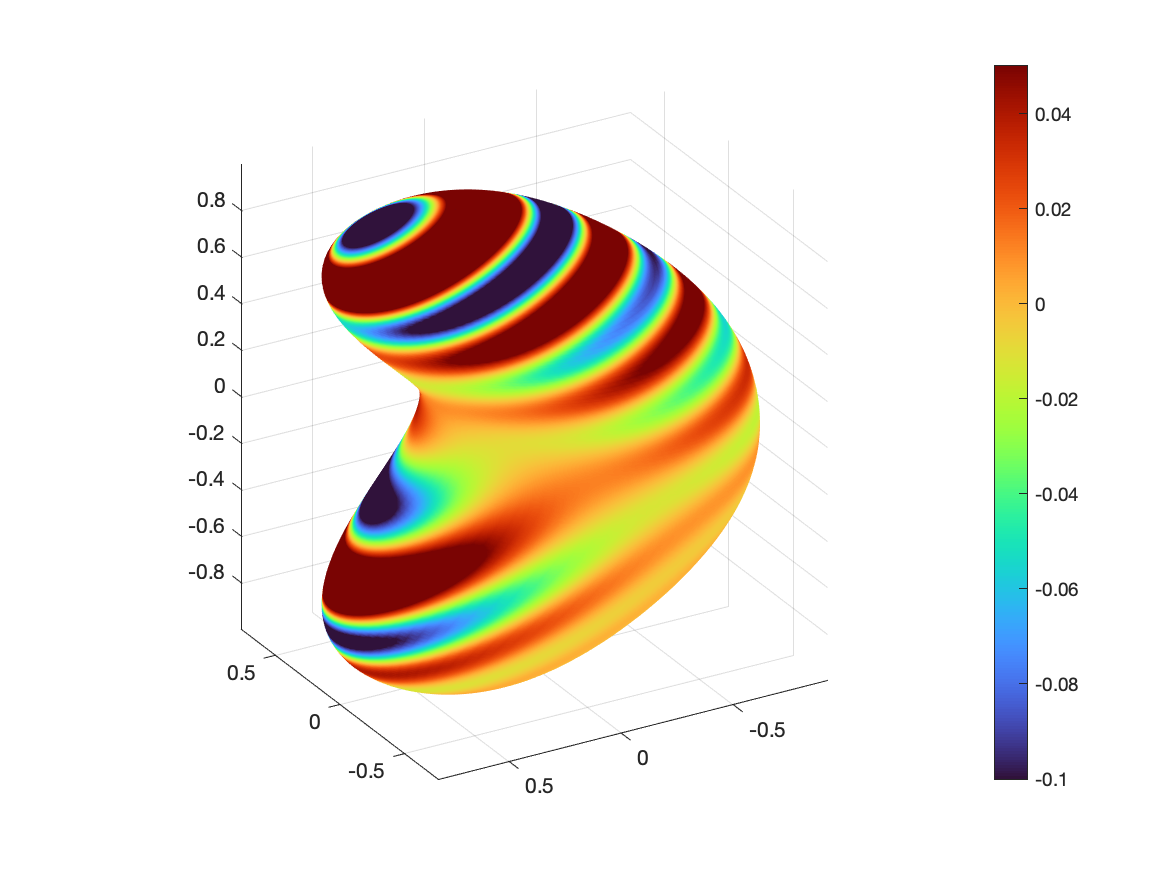}
    \caption{Example 2: Real parts of $\phi$ and $\psi$ on $\Gamma$ for $k=16$ for ${\cal S}$ (left), ${\cal K}$ (middle) and ${\cal K}'$ (right); $N_p$ and $2M$ are chosen according to the last row in Table~\ref{tab:ex2}.}
    \label{picfield2}
\end{figure}
plots the real parts of the unknown density functions $\phi$ in equations~\eqref{eq:bie:S} and~\eqref{eq:bie:Kpie} and $\psi$ in equation~\eqref{eq:bie:Kie} on $\Gamma$.

\noindent {\bf Example 3: A wiggly torus.}
The surface is parameterized,  for $(t,\theta)\in[0,2\pi]^2$, by
\begin{align*}x(t,\theta) &= 1.2(2+\cos(\theta) + 0.25\cos(5t))\cos(t),\\
y(t,\theta) &= (2+\cos(\theta) + 0.25\cos(5t))\sin(t),\\
z(t,\theta) &= 1.7\sin(\theta),
\end{align*}

Table~\ref{tab:ex3} shows the accuracy of ${\cal S}$, ${\cal K}$ and ${\cal K}'$ for specific values of $2M$ and $N_p$ such that the errors are less than $10^{-9}$.
\begin{table}[ht]
\centering
\begin{tabular}{ c|c|c|c|c|c|c}
\hline
 $k$ & $(N_p,2M)$  & $E_{\cal S}$ & $(N_p,2M)$  & $E_{\cal K}$ & $(N_p,2M)$  & $E_{{\cal K}'}$ \\\hline
1 & $(5,30)$ & 8.6E-11 & $(9,30)$ & 3.2E-11 & $(5,30)$ & 1.0E-10\\\hline
2 & $(7,30)$ & 2.5E-10 & $(10,40)$ & 2.4E-11 & $(9,50)$ & 7.8E-11 \\\hline
4 & $(9,30)$ & 6.8E-10 & $(10,40)$ & 1.6E-10 & $(9,50)$ & 3.4E-10 \\\hline
8 & $(14,60)$ & 3.6E-10 & $(14,80)$ & 3.5E-10 & $(14,60)$ & 3.0E-10 \\\hline
16 & $(28,120)$ & 1.8E-10 & $(30,120)$ & 3.3E-10 & $(28,120)$ & 7.0E-11\\\hline
\end{tabular}
\caption{Example 3: Error $E_j, j={\cal S}, {\cal K}$ and ${\cal K}'$ for different values of $k$.}
\label{tab:ex3}
\end{table}
Figure~\ref{picconv3}
\begin{figure}[htb]
    \centering
    \includegraphics[width=0.32\textwidth]{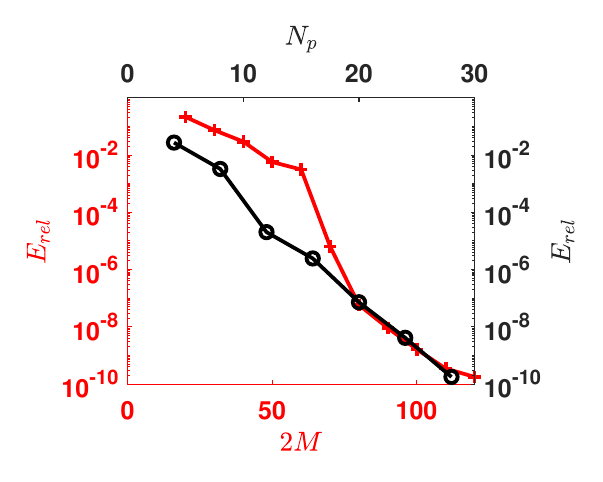}
    \includegraphics[width=0.32\textwidth]{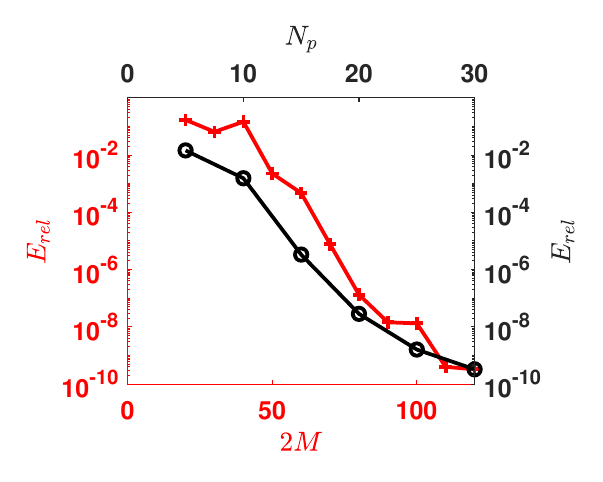}
    \includegraphics[width=0.32\textwidth]{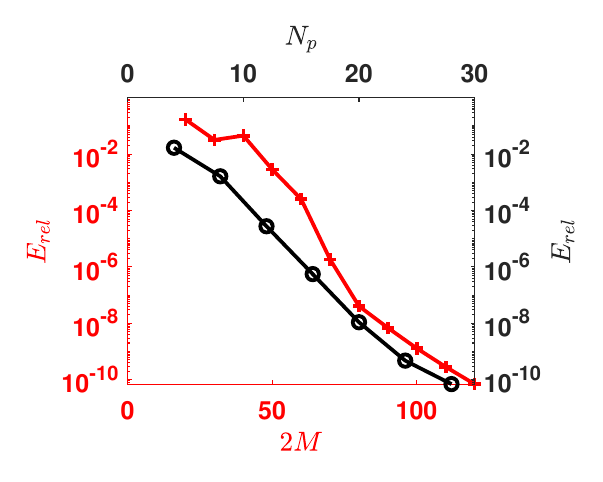}
    \caption{Example 3: Convergence curves of ${\cal S}$, ${\cal K}$ and ${\cal K}'$ for several values of $2M$ and $N_p$ for $k=16$: $N_p$ is $28$ for '$+$'s in the left, $30$ for '$+$'s in the middle, and $28$ for '$+$'s in the right and $2M$ is $100$ for 'o's in the left, $120$ for 'o's in the middle, and $120$ for 'o's in the right.}
    \label{picconv3}
\end{figure}
plots the discretization errors of ${\cal S}$, ${\cal K}$ and ${\cal K}'$ for different values of $2M$ and $N_p$ for $k=16$. Figure~\ref{picfield3}
\begin{figure}[htb]
    \centering
    \includegraphics[width=0.32\textwidth]{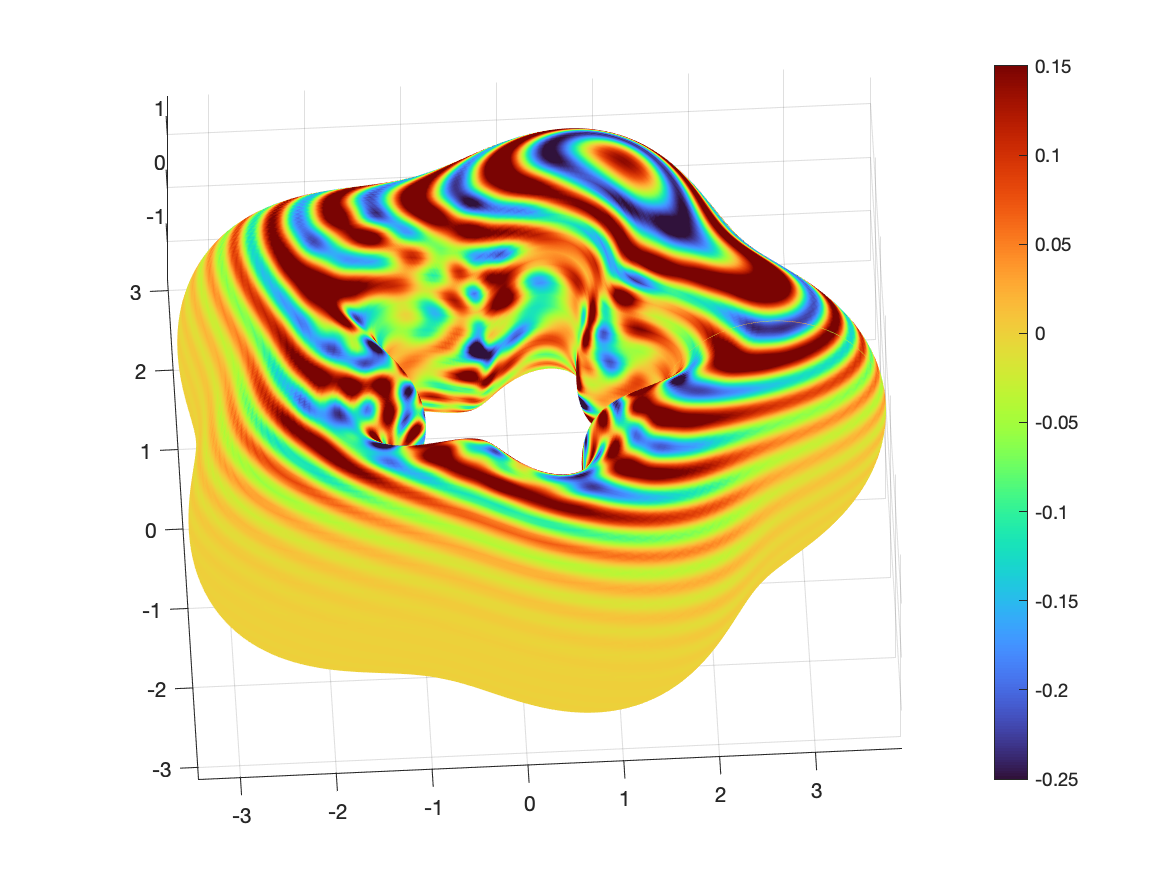}
    \includegraphics[width=0.32\textwidth]{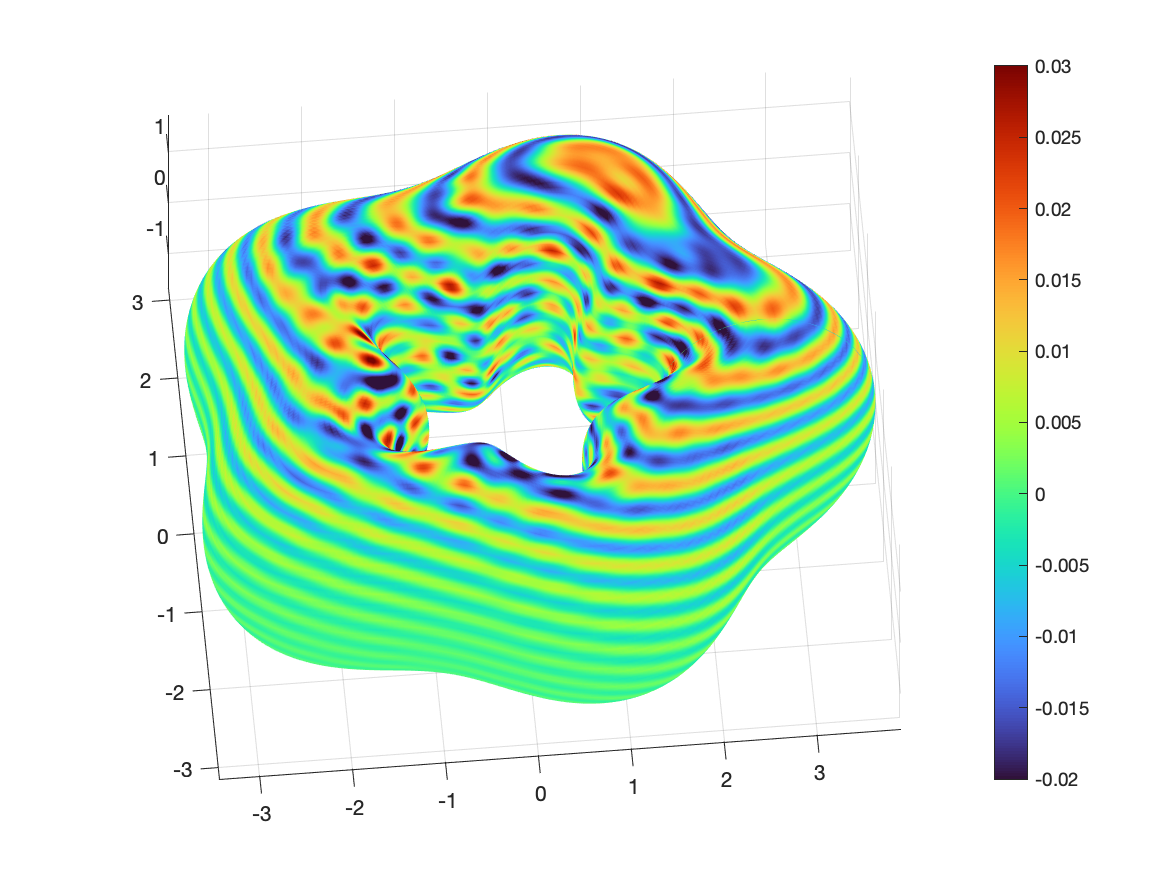}
    \includegraphics[width=0.32\textwidth]{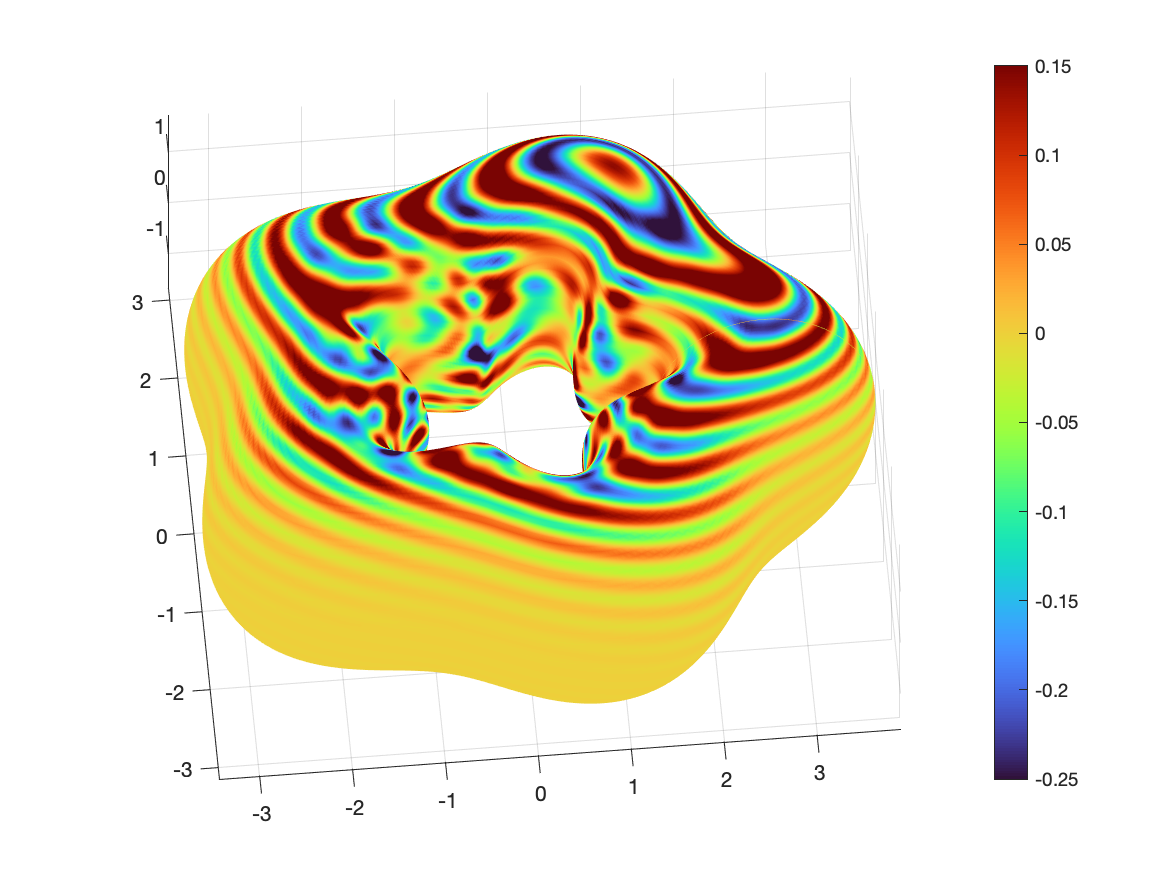}
    \caption{Example 3: Real parts of $\phi$ and $\psi$ on $\Gamma$ for $k=16$ for ${\cal S}$ (left), ${\cal K}$ (middle) and ${\cal K}'$ (right); $N_p$ and $2M$ are chosen according to the last row in Table~\ref{tab:ex3}.}
    \label{picfield3}
\end{figure}
plots the real parts of the unknown density functions $\phi$ in equations~\eqref{eq:bie:S} and~\eqref{eq:bie:Kpie} and $\psi$ in equation~\eqref{eq:bie:Kie} on $\Gamma$.

\noindent {\bf Example 4: A twisted torus.} This last challenging example has been previously studied in \cite{greengard2021fast}.
The equation of the surface $\Gamma$ is given by
\begin{align*}  x(t,\theta) &= \cos(t)(0.17\cos(2\theta-t) + 0.11\cos(2\theta) + \cos(\theta) + 4.5\\
  &- 0.25\cos(\theta) + 0.01\cos(\theta+t) - 0.45\cos(-\theta+t)),\\
  y(t,\theta) &= \sin(t)(0.17\cos(2\theta-t) + 0.11\cos(2\theta) + \cos(\theta) + 4.5\\
  &- 0.25\cos(\theta) + 0.01\cos(\theta+t) - 0.45\cos(-\theta+t)),\\
  z(t,\theta) &= (0.17\sin(2\theta-t) + 0.11\sin(2\theta) + \sin(\theta) - 0.25\sin(\theta)\\
  &+ 0.01\sin(\theta+t) - 0.45\sin(-\theta+t)).
\end{align*}
Table~\ref{tab:ex4} shows the accuracy of ${\cal S}$, ${\cal K}$ and ${\cal K}'$ for specific values of $2M$ and $N_p$ such that the errors are less than $10^{-9}$.
\begin{table}[ht]
\centering
\begin{tabular}{ c|c|c|c|c|c|c}
\hline
 $k$ & $(N_p,2M)$  & $E_{\cal S}$ & $(N_p,2M)$  & $E_{\cal K}$ & $(N_p,2M)$  & $E_{{\cal K}'}$ \\\hline
1 & $(12,100)$ & 3.9E-10 & $(22,120)$ & 5.0E-10 & $(14,120)$ & 9.4E-10\\\hline
2 & $(12,100)$ & 5.5E-10 & $(22,120)$ & 3.2E-10 & $(16,120)$ & 4.4E-10 \\\hline
4 & $(12,100)$ & 9.1E-10 & $(22,120)$ & 3.7E-10 & $(22,120)$ & 2.0E-10 \\\hline
8 & $(14,160)$ & 8.3E-10 & $(28,160)$ & 1.8E-10 & $(22,120)$ & 4.1E-10 \\\hline
\end{tabular}
\caption{Example 4: Error $E_j, j={\cal S}, {\cal K}$ and ${\cal K}'$ for different values of $k$.}
\label{tab:ex4}
\end{table}
Figure~\ref{picfield4}
\begin{figure}[htb]
    \centering
    \includegraphics[width=0.32\textwidth]{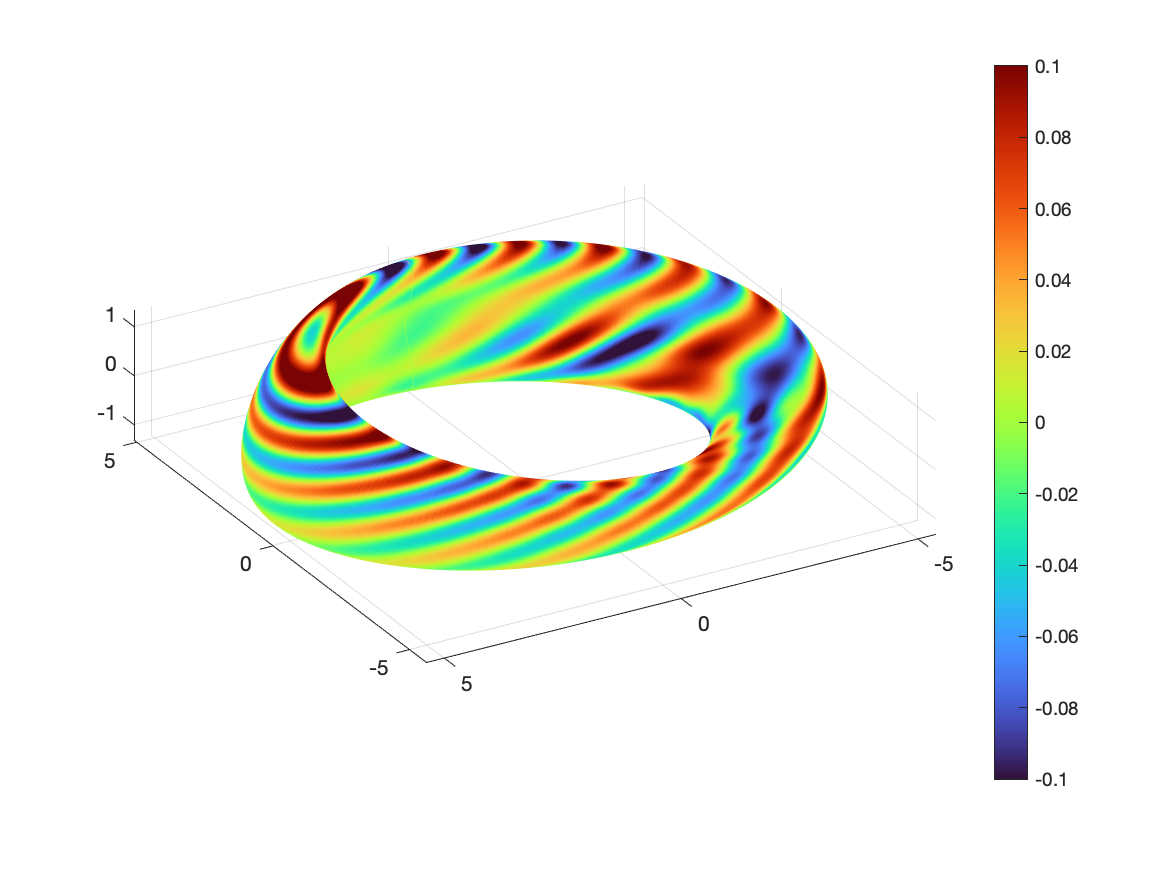}
    \includegraphics[width=0.32\textwidth]{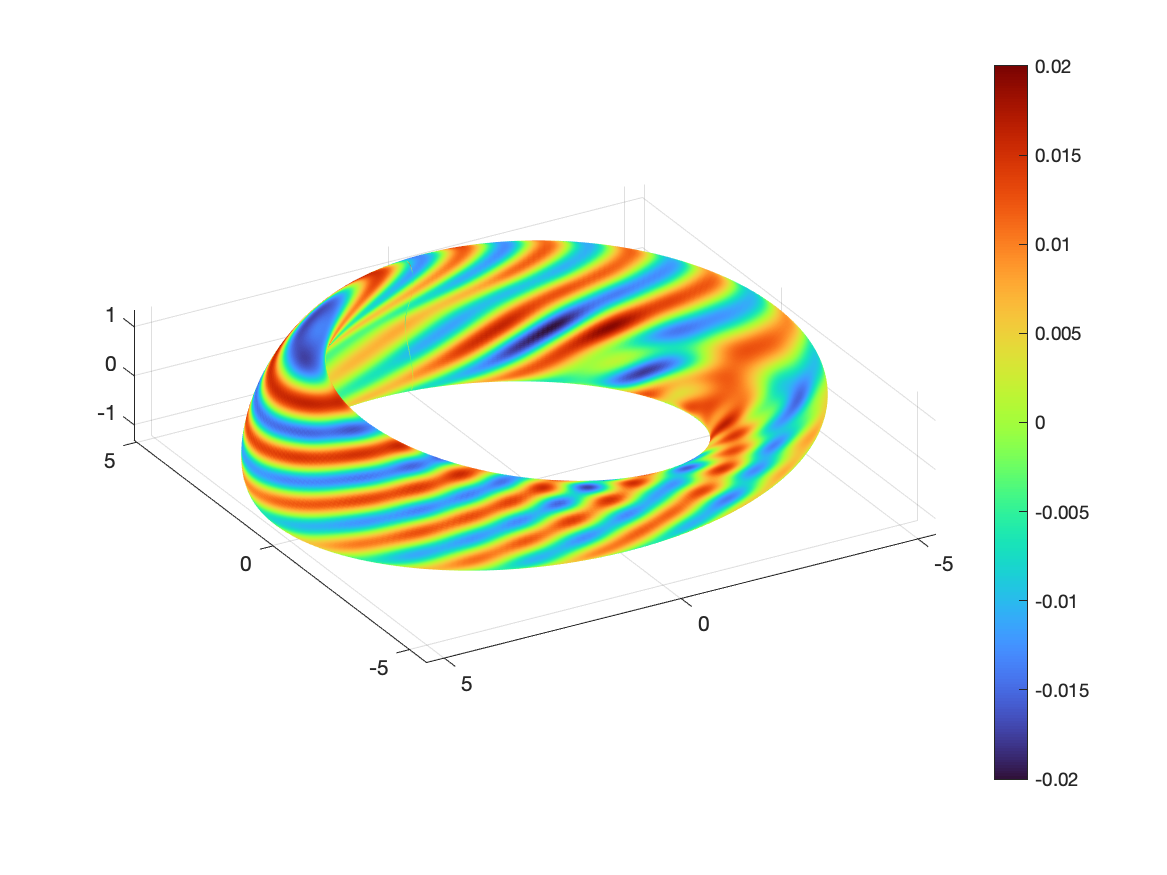}
    \includegraphics[width=0.32\textwidth]{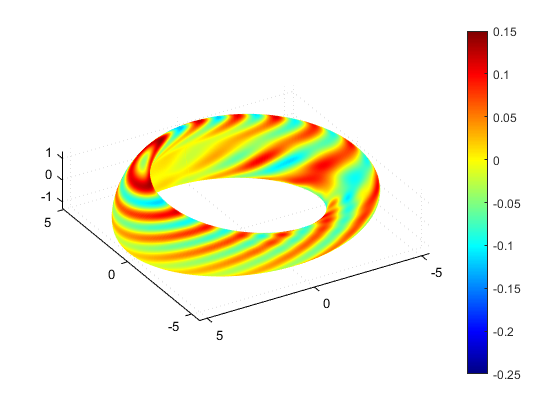}
    \caption{Example 4: Real parts of $\phi$ and $\psi$ on $\Gamma$ for $k=8$ for ${\cal S}$ (left), ${\cal K}$ (middle) and ${\cal K}'$ (right); $N_p$ and $2M$ are chosen according to the last row in Table~\ref{tab:ex4}.}
    \label{picfield4}
\end{figure}
plots the real parts of the unknown density functions $\phi$ in equations~\eqref{eq:bie:S} and~\eqref{eq:bie:Kpie} and $\psi$ in equation~\eqref{eq:bie:Kie} on $\Gamma$.

\section{Conclusion}\label{sec6}
In this paper, we proposed a high-order, FFT-accelerated
BIE method for 3D acoustic scattering by general smooth surfaces. Utilizing the periodicity of the unknown density in the azimuthal direction, we Fourier transformed the governing BIE in the azimuthal variable and obtained coupled integral equations with curve integral operators, where the kernels are exactly Fourier coefficients of 3D fundamental kernels.
We developed fast and accurate algorithms with complexity at most ${\cal O}(M\log M)$ for evaluating the ${\cal O}(M)$ Fourier coefficients of the singular kernels in the integral operators.  Numerical experiments have demonstrated the effectiveness and spectral accuracy of the new approach.

There are at least two research directions. A natural extension would be developing fast algorithms for the discretization of BIEs for 3D electromagnetic and elastic wave scattering by smooth surfaces. Besides, in order to develop fast solvers for the final linear system, we wish to investigate if there exist underlying low-rank structures for the curve integral operators corresponding to the interaction of two different Fourier indices, which in fact vanishes for axisymmetric surfaces. We shall report these works in the future.

\bibliographystyle{siamplain}
\bibliography{helm.bib}

\end{document}